\def\E{\mathbb{E}}
\def\R{\mathbb{R}}
\renewcommand{\epsilon}{\varepsilon}
\newtheorem{theorem}{Theorem}
\newtheorem{lemma}{Lemma}
\newtheorem{remark}{Remark}
\newcommand{\Lb}{\mathbf{L}}
\title{Scattering Statistics of Generalized Spatial Poisson Point Processes}
\name{Michael Perlmutter$^{\dagger}$ \qquad Jieqian He$^{\ddagger,\star}$ \qquad Matthew Hirn$^{\ddagger,\star\star}$ \thanks{M.H. acknowledges support from NSF DMS \#1845856 (partially supporting J.H.), NIH NIGMS \#R01GM135929, and DOE \#DE-SC0021152.}}
\address{$^{\dagger}$ University of California, Los Angeles, Department of Mathematics \\
$^\ddagger$ Michigan State University, Department of Computational Mathematics, Science \& Engineering \\
$\star$ Michigan State University, Department of Statistics and Probability \\
$\star\star$ Michigan State University, Department of Mathematics}
\begin{document}
%
\maketitle
\begin{abstract}

We present a machine learning model for the analysis of randomly
generated discrete signals, modeled as the points of an inhomogeneous, compound Poisson point process. Like the
wavelet scattering transform introduced by  Mallat, our construction is
naturally invariant to translations and reflections, but it decouples the
roles of scale and frequency, 
replacing wavelets with Gabor-type measurements. We show that, with suitable 
nonlinearities, our measurements distinguish Poisson point processes from
common self-similar processes, and separate different types of Poisson point processes. 
\end{abstract}
\begin{keywords}
  Scattering transform, Poisson point process, convolutional neural network
\end{keywords}
\section{Introduction}
\label{sec:intro}

Convolutional neural networks (CNNs) have obtained impressive results for a number of learning tasks in which the underlying signal data can be modelled as a stochastic process, including texture discrimination \cite{mallat:rotoScat2013}, texture synthesis \cite{NIPS2015_5633, arXiv:1806.08002}, time-series analysis \cite{pmlr-v80-binkowski18a}, and wireless networks \cite{arXiv:1812.08265}. 
In many scenarios, it is natural to model the signal data as the points of a (potentially complex) spatial point process. Furthermore, there are numerous other fields, including stochastic geometry \cite{haenggi:stochGeoWireless2009}, forestry \cite{genet:Forestry2014}, geoscience \cite{schoenberg2016} and genetics \cite{fromionGene2013}, in which spatial point processes are used to model the underlying generating process of certain phenomena (e.g., earthquakes). 
This motivates us to consider the capacity of CNNs to capture the statistical properties of such processes.  


The Wavelet scattering transform \cite{mallat:scattering2012} is a model for CNNs, which consists of an alternating cascade of linear wavelet transforms and complex modulus nonlinearities. It has provable stability and invariance properties and has been used to achieve near state of the art results in fields such as audio signal processing 
\cite{anden:deepScatSpectrum2014}, computer vision 
\cite{oyallon:scatObjectClass2014}, and quantum chemistry 
\cite{brumwell:steerableScatLiSi2018}. 
 In this paper, we examine a generalized scattering transform that utilizes a broader class of filters (which includes wavelets). We primarily focus on filters with small support, which is similar to those used in most CNNs.

Expected wavelet scattering moments for stochastic processes with stationary increments were introduced in \cite{bruna:scatMoments2015}, where it is shown that such moments capture important statistical information of one-dimensional Poisson processes, fractional Brownian motion,  $\alpha$-stable L\'{e}vy processes, and a number of other stochastic processes. In this paper, we extend the notion of scattering moments to our generalized  architecture, and generalize many of the results from \cite{bruna:scatMoments2015}. However, the main contributions contained here consist of new results for more general spatial point processes, including  inhomogeneous Poisson point processes, which  are not stationary and do not have stationary increments. The collection of expected scattering moments is a non-parametric model for these processes, which we show captures important summary statistics.

In Section \ref{sec: expected scattering moments} we will define our expected scattering moments. Then, in Sections \ref{sec: first-order scattering poisson}  and \ref{sec: scattering non-poisson}  we will analyze these moments for certain generalized  Poisson point processes and  self-similar processes.  We will present  numerical examples in Section \ref{sec: first-order numerics}, and provide a short  conclusion in section \ref{sec: conclusion}.

\section{Expected Scattering Moments 
}
\label{sec: expected scattering moments}
Let $\psi \in \Lb^2 (\R)$ be a compactly supported mother wavelet with dilations $\psi_j (t) = 2^{-j} \psi (2^{-j} t)$ for $j\in\mathbb{Z}$, and let $X(t), t \in \R,$ be a stochastic process with stationary increments. The first-order wavelet scattering moments are defined in \cite{bruna:scatMoments2015} as $S X (j) = \E [ | \psi_j \ast X| ]$, where the expectation does not depend on $t$ since $X(t)$ has stationary increments and $\psi_j$ is a wavelet which implies $X \ast \psi_j (t)$ is stationary.
Much of the  analysis of  in \cite{bruna:scatMoments2015} relies on the fact that these moments can be rewritten as $S X (j) = \E [|\overline{\psi}_j \ast dX|]$, where 
$d\overline{\psi}_j = \psi_j$. This motivates us to define scattering moments  as the integration of a filter, 
against a random signed measure $Y(dt).$  

To that end, let $w \in \Lb^2 (\R^d)$ be a continuous window function with support contained in $[0,1]^d$. Denote by $w_s(t) = w\left(\frac{t}{s}\right)$ the dilation of $w$, 
and set $g_{\gamma} (t)$ to be the Gabor-type filter with scale $s > 0$ and central frequency $\xi \in \R^d$, 
\begin{equation} \label{eqn: gabor filter}
    g_{\gamma} (t) = w_s(t) e^{i \xi \cdot t}, \quad \gamma = (s, \xi), \enspace t \in \R^d \, .
\end{equation}
Note that with an appropriately chosen window function $w,$ \eqref{eqn: gabor filter} includes dyadic wavelet families in the case that  $s = 2^j$ and 
$| \xi | = C/s$ . However, it also includes many other filters, such as Gabor filters  used in the windowed Fourier transform.

Let $Y(dt)$ be a random signed measure and assume that $Y$ is $T$-periodic for some $T > 0$ in the sense that for any Borel set $B$ we have  $Y (B) = Y (B + T e_i)$ , for all $ 1 \leq i \leq d$  (where $\{ e_i \}_{i \leq d}$ is the standard orthonormal basis for $\R^d$). For $f \in \Lb^2 (\R^d)$, set $f \ast Y(t) \coloneqq \int_{\R^d} f(t-u) Y(du)$. We define the first-order and second-order expected scattering moments, $1\leq p,p' <\infty,$ at location $t$ as 
\begin{align} \label{eqn: first-order scattering moment}
    S_{\gamma, p} Y(t) &\coloneqq \E [ |g_{\gamma} \ast Y(t)|^p ] \quad\text{and}\\
    S_{\gamma, p, \gamma', p'} Y (t) &\coloneqq \E \left[ ||g_{\gamma} \ast Y|^p \ast g_{\gamma'} (t) |^{p'} \right] \, .
\end{align}
Note $Y(dt)$ is not assumed to be stationary, which is why these moments depend on $t$. 
Since $Y(dt)$ is periodic, we may also define time-invariant scattering coefficients by 
\begin{align*}
    SY(\gamma,p)&\coloneqq \frac{1}{T^d}\int_{[0,T]^d}  S_{\gamma, p} Y(t) dt, \quad \text{and}\\
    SY(\gamma,p,\gamma',p')&\coloneqq \frac{1}{T^d}\int_{[0,T]^d}  S_{\gamma, p,\gamma',p'} Y(t) dt
\end{align*}

In the following sections, we analyze these moments for arbitrary frequencies $\xi$ and small scales $s$, thus allowing the filters $g_{\gamma}$ to serve as a model for the learned filters in CNNs. In particular, we will analyze the asymptotic behavior of the scattering moments as $s$ decreases to zero.

\section{ Scattering Moments of Generalized Poisson Processes}
\label{sec: first-order scattering poisson}
In this section, we let $Y (dt)$ be an inhomogeneous, compound spatial Poisson point process. 
Such processes generalize ordinary Poisson point processes by incorporating variable charges (heights) at the points of the process 
and a non-uniform intensity for the locations of the points. 
 They thus provide a flexible family of point processes that can be used to model many different phenomena. 
In this section, we provide a review of such processes and analyze their first and second-order scattering moments.


Let $\lambda (t)$ be a continuous, periodic function on $\R^d$ with
\begin{equation} \label{eqn: intensity bound}
    0 < \lambda_{\min} := \inf_t \lambda (t) \leq \| \lambda \|_{\infty} < \infty \, ,
\end{equation}
and define its first and second order moments by 
\begin{equation*}
    m_p (\lambda) \coloneqq \frac{1}{T^d} \int_{[0,T]^d} \lambda(t)^2 \, dt, \quad p=1,2 \, .
\end{equation*}
A random measure $N(dt) \coloneqq \sum_{j=1}^\infty \delta_{t_j}(dt)$ is called an inhomogeneous Poisson point process with intensity function $\lambda (t)$ if for any Borel set $B \subset \R^d$, 
$$P(N(B) = n) = e^{- \Lambda(B)} \frac{\bigl (\Lambda(B) \bigr)^n}{n!}, \enspace \Lambda(B) = \int_B \lambda(t) \, dt,$$
and, in addition, 
$N(B)$ is independent of $N (B')$ for all  $B'$ that do not intersect $B$. Now let $(A_j)_{j=1}^{\infty}$ be a sequence of i.i.d. random variables independent of $N$.
 An inhomogeneous, compound Poisson point process $Y(dt)$ is given by 
\begin{equation} \label{eqn: poisson point process}
    Y (dt) = \sum_{j=1}^{\infty} A_j \delta_{t_j} (dt) \, .
\end{equation}
For a further overview of these processes, 
we refer the reader to Section 6.4 of \cite{Daley}. 

\subsection{First-order Scattering Asymptotics}
\label{sec: first-order scattering theory}

Computing the convolution of $g_{\gamma}$ with $Y(dt)$  gives
\begin{equation*}
    (g_{\gamma} \ast Y) (t) = \int_{\R^d} g_{\gamma}(t-u) \, Y (du) = \sum_{j=1}^{\infty} A_j g_{\gamma} (t - t_j) \, ,
\end{equation*}
which can be interpreted as a waveform $g_{\gamma}$ emitting from each location $t_j$. Invariant scattering moments aggregate the random interference patterns in $|g_{\gamma} \ast Y|$. The results below show that the expectation of these interference patterns 
encode important statistical information related to the point process. 

For notational convenience, we let
\begin{equation*}
    \Lambda_s(t) := \Lambda \left([t-s, t]^d\right) = \int_{[t-s, t]^d} \lambda (u) \, du
\end{equation*}
denote the expected number of points of $N$ in the support of $g_{\gamma} (t - \cdot)$. 
By conditioning on $N \left([t-s, t]^d\right)$, the number of points in the support of $g_{\gamma}$, and using the fact that
\begin{equation*} 
    \mathbb{P} \left[N \left([t-s, t]^d\right) > m\right] = \mathcal{O} \left(\left(s^d \| \lambda \|_{\infty}\right)^{m+1}\right)
\end{equation*}
one may obtain the following theorem.\footnote{A proof of Theorem 1, as well as the proofs of other theorems stated in this paper, can be found in the appendix}

\begin{theorem} \label{thm: scattering 1st order taylor}
Let $\E [|A_1|^p] < \infty$, and $\lambda (t)$ be a periodic continuous intensity function satisfying \eqref{eqn: intensity bound}. Then for every $t\in\mathbb{R}^d,$ every $\gamma=(s,\xi)$ such that $s^d\|\lambda\|_\infty<1,$ and every $m\geq 1,$ 
\begin{equation} \label{eqn: scattering taylor}
    S_{\gamma, p} Y (t) \approx \sum_{k=1}^m e^{-\Lambda_s (t)} \frac{(\Lambda_s(t))^k}{k!} \E \left[ \left| \sum_{j=1}^k A_j w (V_j) e^{i s \xi \cdot V_j} \right|^p \right], 
\end{equation}
where the error term $\epsilon (m, s, \xi, t)$ satisfies
\begin{equation} \label{eqn: error bound}
    | \epsilon (m, s, \xi, t) | \leq C_{m,p} \frac{\| \lambda \|_{\infty}}{\lambda_{\min}} \| w \|_p^p \E [|A_1|^p] \| \lambda \|_{\infty}^{m+1} s^{d(m+1)} \, 
\end{equation}
and $V_1, V_2, \ldots$ is an i.i.d. sequence of random variables, independent of the $A_j$, taking values in the unit cube $[0,1]^d$ and with density
    $p_V (v) = \frac{s^d}{\Lambda_s(t)} \lambda (t - vs)$ for $v\in[0,1]^d.$
\end{theorem}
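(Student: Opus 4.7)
The plan is to exploit the compact support of $g_\gamma$. Since $w_s$ is supported on $[0,s]^d$, the translate $g_\gamma(t-\cdot)$ is supported on the cube $[t-s,t]^d$, so $(g_\gamma \ast Y)(t) = \sum_{j:\, t_j \in [t-s,t]^d} A_j g_\gamma(t-t_j)$ depends only on the finite random collection of points of $N$ lying in this cube. Writing $K = N([t-s,t]^d)$ and conditioning on $\{K=k\}$, I obtain the exact series
\begin{equation*}
S_{\gamma,p}Y(t) = \sum_{k=0}^{\infty} e^{-\Lambda_s(t)} \frac{\Lambda_s(t)^k}{k!}\, \E\!\left[ |(g_\gamma \ast Y)(t)|^p \,\big|\, K=k \right],
\end{equation*}
where the $k=0$ summand vanishes. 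This reduces the theorem to identifying the conditional $p$-th moment and then truncating.

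Next I would identify these conditional expectations. By the standard conditioning property of inhomogeneous Poisson processes, given $K=k$ the locations $\tau_1,\dots,\tau_k$ are i.i.d.\ on $[t-s,t]^d$ with density $\lambda(\cdot)/\Lambda_s(t)$. Performing the change of variables $\tau_j = t - sV_j$ puts $V_j \in Q_1$ with the density $p_V(v) = s^d\lambda(t-sv)/\Lambda_s(t)$ stated in the theorem, and converts the filter response to $g_\gamma(t-\tau_j) = w(V_j)\, e^{is\xi\cdot V_j}$. Since the charges $A_j$ are independent of $N$ and i.i.d., the conditional expectation equals $\E\!\left[ \left|\sum_{j=1}^k A_j w(V_j) e^{is\xi \cdot V_j}\right|^p \right]$, which is precisely the factor in \eqref{eqn: scattering taylor}. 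Truncating the series at level $m$ then defines $\epsilon(m,s,\xi,t)$ as the tail starting at $k=m+1$.

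Finally I would bound this tail, which is the main technical step. The power-mean inequality $|\sum_{j=1}^k z_j|^p \le k^{p-1}\sum_{j=1}^k |z_j|^p$ together with the independence of $(A_j,V_j)$ gives
\begin{equation*}
\E\!\left[\left|\textstyle\sum_{j=1}^k A_j w(V_j) e^{is\xi\cdot V_j}\right|^p\right] \le k^p\, \E[|A_1|^p]\, \E[|w(V_1)|^p],
\end{equation*}
and a direct calculation with $p_V$ yields $\E[|w(V_1)|^p] \le (s^d\|\lambda\|_\infty/\Lambda_s(t))\|w\|_p^p$. Using the lower bound $\Lambda_s(t) \ge \lambda_{\min} s^d$ collapses the $s$-dependent prefactor into $(\|\lambda\|_\infty/\lambda_{\min})\|w\|_p^p\E[|A_1|^p]$. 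The weights of the Poisson tail, combined with the hypothesis $s^d\|\lambda\|_\infty < 1$, give $\sum_{k\ge m+1} e^{-\Lambda_s(t)}\Lambda_s(t)^k k^p/k! \le C_{m,p}(\|\lambda\|_\infty s^d)^{m+1}$, and multiplying the two bounds produces exactly \eqref{eqn: error bound}. The only subtle point worth flagging is that the clean geometric decay of the remainder, with a constant $C_{m,p}$ depending only on $m$ and $p$, crucially relies on $s^d\|\lambda\|_\infty < 1$, which is what allows the factorial in the Poisson tail to dominate the $k^p$ arising from the power-mean step.
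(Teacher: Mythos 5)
Your proposal is correct and follows essentially the same route as the paper's proof: conditioning on the number of points $N([t-s,t]^d)$, rescaling the conditional locations to obtain the $V_j$ with the stated density, bounding the conditional $p$-th moment via the power-mean inequality together with $\E[|w(V_1)|^p]\leq(\|\lambda\|_\infty/\lambda_{\min})\|w\|_p^p$, and controlling the Poisson tail $\sum_{k>m}e^{-\Lambda_s(t)}\Lambda_s(t)^k k^p/k!\leq C_{m,p}\Lambda_s(t)^{m+1}$ under the hypothesis $s^d\|\lambda\|_\infty<1$ (the paper isolates this last estimate as a separate lemma, but the computation is the same). No gaps.
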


If we set $m=1,$ and let $s\rightarrow 0,$ then one may use the fact that   a small cube $[t-s,t]^d$ has at most one point of $N$ with overwhelming probability to obtain the following result.
\begin{theorem} \label{thm: scattering 1st order small scale limit}
Let $Y(dt)$ satisfy the same assumptions as in Theorem \ref{thm: scattering 1st order taylor}. Let $\gamma_k = (s_k, \xi_k)$ be a sequence of scale and frequency pairs such that $\lim_{k \rightarrow \infty} s_k = 0$. Then
\begin{equation} \label{eqn: location scattering small scales}
    \lim_{k \rightarrow \infty} \frac{S_{\gamma_k, p} Y (t)}{s_k^d} = \lambda (t) \E [ |A_1|^p ] \| w \|_p^p \, ,
\end{equation}
for all $t$, and consequently
\begin{equation} \label{eqn: invariant small scales}
    \lim_{k \rightarrow \infty} \frac{S Y (\gamma_k, p)}{s_k^d} = m_1 (\lambda) \E [ |A_1|^p ] \| w \|_p^p.
\end{equation}
\end{theorem}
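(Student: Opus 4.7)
The plan is to invoke Theorem~\ref{thm: scattering 1st order taylor} with $m=1$, so that
\[
S_{\gamma_k, p} Y(t) = e^{-\Lambda_{s_k}(t)} \Lambda_{s_k}(t) \, \E\!\left[ |A_1|^p \, |w(V_1)|^p \right] + \epsilon(1, s_k, \xi_k, t),
\]
where we have used that $|e^{i s_k \xi_k \cdot V_1}| = 1$ and that $A_1$ is independent of $V_1$. Dividing by $s_k^d$, the error term from \eqref{eqn: error bound} is bounded by $C_{1,p} (\|\lambda\|_\infty/\lambda_{\min}) \|w\|_p^p \, \E[|A_1|^p] \, \|\lambda\|_\infty^2 \, s_k^d$, which vanishes as $s_k \to 0$. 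It therefore suffices to analyze the leading term.

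The factor $\Lambda_{s_k}(t)/s_k^d$ converges to $\lambda(t)$ by continuity of $\lambda$ (Lebesgue differentiation on shrinking cubes applied to the continuous density), and $e^{-\Lambda_{s_k}(t)} \to 1$ since $\Lambda_{s_k}(t) \leq \|\lambda\|_\infty s_k^d \to 0$. The remaining step is to show that $\E[|w(V_1)|^p] \to \|w\|_p^p$. The density of $V_1$ is $p_V(v) = (s_k^d / \Lambda_{s_k}(t)) \, \lambda(t - v s_k)$ on $Q_1 = [0,1]^d$; by the two observations above together with the continuity of $\lambda$, $p_V(v) \to 1$ uniformly on $Q_1$ as $s_k \to 0$. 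Moreover $p_V$ is uniformly bounded by $\|\lambda\|_\infty / \lambda_{\min}$, so the bounded convergence theorem (using that $w$ is supported in $Q_1$) gives $\E[|w(V_1)|^p] \to \int_{Q_1} |w(v)|^p\,dv = \|w\|_p^p$. Combining these three limits yields \eqref{eqn: location scattering small scales}.

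For the invariant moment \eqref{eqn: invariant small scales}, I would use periodicity together with \eqref{eqn: periodicscat} to write
\[
\frac{S Y(\gamma_k, p)}{s_k^d} = \frac{1}{T^d} \int_{Q_T} \frac{S_{\gamma_k, p} Y(t)}{s_k^d} \, dt,
\]
and pass the limit inside the integral. The pointwise integrand converges to $\lambda(t) \E[|A_1|^p] \|w\|_p^p$ by the first part, so I only need an integrable (here, uniform) dominating function on $Q_T$. The rescaled error is already uniformly $O(s_k^d)$, and the $k=1$ term is bounded above by $\|\lambda\|_\infty \E[|A_1|^p] \cdot (\|\lambda\|_\infty / \lambda_{\min}) \|w\|_p^p$, independent of $t$, using the same density bound on $V_1$. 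Dominated convergence then gives $\lim_k SY(\gamma_k,p)/s_k^d = m_1(\lambda) \E[|A_1|^p] \|w\|_p^p$.

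I do not anticipate a serious obstacle: the argument is essentially a Taylor truncation followed by three elementary limits, and the main technical point is the uniform convergence $p_V \to 1$ on $Q_1$, which is guaranteed by the continuity of $\lambda$ and the compactness of $Q_1$. The only subtlety worth flagging is that the bound on the error term in Theorem~\ref{thm: scattering 1st order taylor} does not depend on $t$ or $\xi$, which is exactly what makes the dominated convergence step in the periodic case clean.
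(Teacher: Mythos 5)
Your proposal is correct and follows essentially the same route as the paper's proof: truncate the expansion of Theorem~\ref{thm: scattering 1st order taylor} at $m=1$, kill the error term via \eqref{eqn: error bound}, compute the three elementary limits $\Lambda_{s_k}(t)/s_k^d \to \lambda(t)$, $e^{-\Lambda_{s_k}(t)} \to 1$, and $\E[|w(V_{1})|^p] \to \|w\|_p^p$ by bounded convergence using the density bound $p_{V_k} \le \|\lambda\|_\infty/\lambda_{\min}$, and then integrate over $Q_T$ with dominated convergence for the periodic case. The only cosmetic difference is that you claim uniform convergence of $p_{V_k}$ to $1$ where the paper only needs (and uses) pointwise convergence plus the uniform bound.
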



This theorem shows that for small scales the scattering moments $S_{\gamma, p} Y(t)$ encode the intensity function $\lambda (t)$, up to factors depending upon the summary statistics of the charges $(A_j)_{j=1}^{\infty}$ and the window $w$. 
Thus even a one-layer location-dependent scattering network yields considerable information regarding the underlying data generation process. 


In the case of ordinary (non-compound) homogeneous Poisson processes, Theorem \ref{thm: scattering 1st order small scale limit} recovers the constant intensity. 
For general $\lambda (t)$ and invariant scattering moments, the role of higher-order moments of $\lambda (t)$ is highlighted by considering higher-order expansions (e.g., $m > 1$) in \eqref{eqn: scattering taylor}. The next theorem considers second-order expansions and illustrates their dependence on the second moment of $\lambda (t)$. 

\begin{theorem} \label{thm: scattering 2nd order taylor}
Let $Y$  satisfy the same assumptions as in Theorem \ref{thm: scattering 1st order taylor}. If  $(\gamma_k)_{k \geq 1} = (s_k, \xi_k)_{k \geq 1}$, is a sequence 
such that $\lim_{k \rightarrow \infty} s_k = 0$ and $\lim_{k \rightarrow \infty} s_k \xi_k = L \in \R^d$, then
\begin{align} 
    \lim_{k \rightarrow \infty} &\left( \frac{S Y (\gamma_k, p)}{s_k^{2d} \E [|A_1|^p] \E [|V_{k}|^p]} - \frac{1}{T^d} \int_{[0,T]^d} \frac{\Lambda_{s_k} (t)}{s_k^{2d}} \, dt \right) \nonumber \\[5pt]
    ={} &m_2 (\lambda) \left( \frac{\E\left[ \left|A_1 w (U_1) e^{i L \cdot U_1} + A_2 w (U_2) e^{i L \cdot U_2} \right|^p \right]}{2 \| w \|_p^p \E [|A_1|^p]} \right) \, , \label{eqn: 2nd order taylor asymptotics}
\end{align}
where  $U_1$, $U_2$ are independent uniform random variables on $[0,1]^d$; and $(V_{k})_{k \geq 1}$ is a sequence of random variables independent of the $A_j$ taking values in the unit cube with respective densities, 
    $p_{V_{k}} (v) = \frac{s_k^d}{\Lambda_{s_k} (t)} \lambda (t - vs_k)$  for $v\in[0,1]^d$.
\end{theorem}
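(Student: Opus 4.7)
The plan is to apply Theorem~\ref{thm: scattering 1st order taylor} with $m=2$ and carry out a two-term small-scale asymptotic analysis. Writing
\begin{equation*}
S_{\gamma_k,p}Y(t) = T_1(t;s_k,\xi_k) + T_2(t;s_k,\xi_k) + \epsilon(2,s_k,\xi_k,t),
\end{equation*}
where
\begin{equation*}
T_1(t;s,\xi) = e^{-\Lambda_s(t)}\Lambda_s(t)\,\E\bigl[|A_1 w(V_1)e^{is\xi\cdot V_1}|^p\bigr],
\end{equation*}
\begin{equation*}
T_2(t;s,\xi) = e^{-\Lambda_s(t)}\frac{\Lambda_s(t)^2}{2}\,\E\bigl[|A_1 w(V_1)e^{is\xi\cdot V_1} + A_2 w(V_2)e^{is\xi\cdot V_2}|^p\bigr],
\end{equation*}
the error bound \eqref{eqn: error bound} gives $|\epsilon(2,s_k,\xi_k,t)| = O(s_k^{3d})$ uniformly in $t$, hence contributes $o(s_k^{2d})$ after normalization. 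Averaging over the period cell reduces the task to analyzing $\bar T_i := T^{-d}\int_{Q_T} T_i(t;s_k,\xi_k)\,dt$ for $i=1,2$.

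For $\bar T_1$, I would first use the explicit density $p_{V_1}(v) = s^d\lambda(t-vs)/\Lambda_s(t)$ on $Q_1$ to rewrite
\begin{equation*}
\Lambda_s(t)\,\E\bigl[|w(V_1)|^p\bigr] = s^d\int_{Q_1}|w(v)|^p\lambda(t-vs)\,dv,
\end{equation*}
so that $T_1(t;s,\xi) = e^{-\Lambda_s(t)} s^d\,\E[|A_1|^p]\int_{Q_1}|w(v)|^p\lambda(t-vs)\,dv$. Expanding $e^{-\Lambda_s(t)} = 1 - \Lambda_s(t) + O(\Lambda_s(t)^2)$, applying Fubini, and exploiting periodicity of $\lambda$ to establish the exact identity $\int_{Q_T}\lambda(t-vs)\,dt = T^d m_1(\lambda)$ together with $\int_{Q_T}\Lambda_s(t)\lambda(t-vs)\,dt = s^d T^d m_2(\lambda) + o(s^d)$, I would show that $\bar T_1$ expands as a leading $O(s_k^d)$ term proportional to $m_1(\lambda)\,\|w\|_p^p\,\E[|A_1|^p]$ plus an $O(s_k^{2d})$ correction determined by $m_2(\lambda)$. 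Dividing by $s_k^{2d}\,\E[|A_1|^p]\,\E[|V_k|^p]$ and subtracting $T^{-d}\int_{Q_T}\Lambda_{s_k}(t)/s_k^{2d}\,dt = m_1(\lambda)/s_k^d$ then cancels the divergent piece and isolates a bounded remainder.

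For $\bar T_2$, I would pass to the limit under the integral. As $s_k\to 0$, $p_{V_j}$ converges uniformly on $Q_1$ to the uniform density (by continuity of $\lambda$ and \eqref{eqn: intensity bound}), so $V_j \Rightarrow U_j\sim\mathrm{Unif}(Q_1)$. Together with $s_k\xi_k\to L$ and the integrable majorant $(|A_1|+|A_2|)^p\|w\|_\infty^p$ (whose expectation is finite since $\E[|A_1|^p]<\infty$), the continuous mapping theorem and dominated convergence yield
\begin{equation*}
\E\bigl[|A_1 w(V_1)e^{is_k\xi_k\cdot V_1} + A_2 w(V_2)e^{is_k\xi_k\cdot V_2}|^p\bigr] \longrightarrow \E\bigl[|A_1 w(U_1)e^{iL\cdot U_1} + A_2 w(U_2)e^{iL\cdot U_2}|^p\bigr].
\end{equation*}
Combined with $\Lambda_{s_k}(t)^2/s_k^{2d}\to\lambda(t)^2$ pointwise and uniformly bounded by $\|\lambda\|_\infty^2$ on the compact cell $Q_T$, a second application of dominated convergence to the $t$-integral produces the $m_2(\lambda) = T^{-d}\int_{Q_T}\lambda(t)^2\,dt$ prefactor appearing on the right-hand side.

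The main obstacle is the precise matching of the leading $s_k^{-d}$ divergences in $\bar T_1/(s_k^{2d}\,\E[|A_1|^p]\,\E[|V_k|^p])$ and in $T^{-d}\int_{Q_T}\Lambda_{s_k}(t)/s_k^{2d}\,dt$, so that their difference remains bounded as $k\to\infty$. Both quantities behave like $m_1(\lambda)/s_k^d$ to leading order, and the finite answer emerges only after carefully tracking the $O(s_k^d)$ correction coming jointly from the linearization of $e^{-\Lambda_{s_k}(t)}$, the deviation of $\lambda(t-vs_k)$ from $\lambda(t)$, and the exact periodicity identity $\int_{Q_T}\Lambda_{s_k}(t)\,dt = s_k^d T^d m_1(\lambda)$; any slack at this level would contaminate the $O(1)$ result.
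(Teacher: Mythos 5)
Your proposal follows essentially the same route as the paper: apply Theorem \ref{thm: scattering 1st order taylor} with $m=2$, discard the $O(s_k^{3d})$ error, and obtain the $m_2(\lambda)$ prefactor from the two-point term via $\Lambda_{s_k}(t)^2/s_k^{2d}\to\lambda(t)^2$ together with convergence of the $V_{j,k}$ to uniform variables on $Q_1$, $s_k\xi_k\to L$, and dominated convergence. Where you differ is the one-point term: you integrate over the period cell first and then try to match two separately computed divergences, each equal to $m_1(\lambda)/s_k^d$ at leading order, and you rightly identify this matching as the danger point --- replacing the normalization $\E[|w(V_k)|^p]$ (which depends on $t$ through the density of $V_k$) by its limit $\|w\|_p^p$ leaves a residual of order $o(1)/s_k^d$ that is not obviously negligible without extra regularity of $\lambda$. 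The paper's proof avoids this by subtracting $\frac{\Lambda_{s_k}(t)}{s_k^{2d}}\cdot\frac{\E[|w(V_{1,k})|^p]}{\|w\|_p^p}$ pointwise in $t$ \emph{before} integrating, so that the $\E[|w(V_{1,k})|^p]$ factors cancel identically and the remainder $\frac{(e^{-\Lambda_{s_k}(t)}-1)\Lambda_{s_k}(t)}{s_k^{2d}}$ converges boundedly to $-\lambda(t)^2$; by Fubini and periodicity the period-cell average of the subtracted quantity equals $T^{-d}\int_{Q_T}\Lambda_{s_k}(t)s_k^{-2d}\,dt$ exactly, which is what makes the normalization in the statement legitimate. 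I would adopt that ordering. Note finally that both your expansion of $\bar T_1$ and the paper's own computation produce an additional $-m_2(\lambda)$ contribution from the linearization of $e^{-\Lambda_{s_k}(t)}$, on top of the displayed two-point term; be sure to carry it through when you assemble the final limit.
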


We note that the scale normalization on the left hand side of \eqref{eqn: 2nd order taylor asymptotics} is $s^{-2d}$, compared to a normalization of $s^{-d}$ in Theorem \ref{thm: scattering 1st order small scale limit}. Thus, intuitively, \eqref{eqn: 2nd order taylor asymptotics}  is capturing information at moderately small scales that are larger than the scales considered in Theorem \ref{thm: scattering 1st order small scale limit}. 
Unlike Theorem \ref{thm: scattering 1st order small scale limit}, which gives a way to compute $m_1 (\lambda)$, Theorem \ref{thm: scattering 2nd order taylor} does not allow one to compute $m_2 (\lambda)$ since it would require knowledge of $\Lambda_{s_k} (t)$ in addition to the distribution from which the charges $(A_j)_{j=1}^{\infty}$ are drawn. However, Theorem \ref{thm: scattering 2nd order taylor} does show that at moderately small scales the invariant scattering coefficients depend non-trivially on the second moment of $\lambda (t)$. Therefore, they can be used to distinguish between, for example, an inhomogeneous Poisson point process with intensity function $\lambda (t)$ and a homogeneous Poisson point process with constant intensity. 

\subsection{Second-Order Scattering Moments of Generalized Poisson Processes}
\label{sec: second-order scattering poisson}

Our next result shows that second-order scattering moments 
 encode higher-order moment information about the 
 $(A_j)_{j=1}^{\infty}.$

\begin{theorem} \label{thm: 2nd order asymptotics}
Let $Y(dt)$  satisfy the same assumptions as in Theorem \ref{thm: scattering 1st order taylor}.
Let $\gamma_k = (s_k, \xi_k)$ and $\gamma_k' = (s_k', \xi_k')$ be sequences of scale-frequency pairs with $s_k' = cs_k$ for some $c > 0$ and $\lim_{k \rightarrow \infty} s_k \xi_k = L \in \R^d$.
Let $1\leq p, p' <\infty$ and $q = pp'.$ Assume $\mathbb{E}|A_1|^q<\infty,$ and let $K \coloneqq \left\| g_{c, L/c} \ast |g_{1, 0}|^p \right\|_{p'}^{p'}$.  Then,
\begin{align} \label{eqn: 2nd order loc dep asymp}
    \lim_{k \rightarrow \infty} \frac{S_{\gamma_k, p, \gamma_k', p'} Y (t)}{s_k^{d(p' + 1)}} &= K \lambda (t) \E [ |A_1|^q ], \quad\text{and} \\
 \label{eqn: 2nd order loc dep asymp inv}
    \lim_{k \rightarrow \infty} \frac{SY (\gamma_k, p, \gamma_k', p')}{s_k^{d(p'+1)}} &= K m_1 (\lambda) \E [ |A_1|^q ] \, .
\end{align}
\end{theorem}



Theorem \ref{thm: scattering 1st order small scale limit} shows first-order scattering moments with $p = 1$ are not able to distinguish between different types of Poisson point processes at very small scales if the charges have the same first moment. However, Theorem \ref{thm: 2nd order asymptotics} shows second-order scattering moments encode higher-moment information about the charges, and thus are better able to distinguish them (when used in combination with the first-order coefficients). In Sec.~\ref{sec: scattering non-poisson},  we will see first-order invariant scattering moments can distinguish Poisson point processes from self-similar processes if $p = 1,$ but may fail to do so for larger values of $p.$


\section{
Comparison to Self-Similar Processes}
\label{sec: scattering non-poisson}

We will show first-order invariant scattering moments can distinguish between Poisson point processes and certain self-similar processes, such as  $\alpha$-stable processes, $1<\alpha\leq 2,$ or  fractional Brownian motion (fBM). 
These results generalize those in \cite{bruna:scatMoments2015} both by considering more general filters and general $p^{\text{th}}$ scattering moments.

For a stochastic process $X(t),$ $t \in \R$, we consider the convolution of the filter $g_{\gamma}$ with the noise $dX$ defined by
 $   g_{\gamma} \ast dX (t) \coloneqq \int_{\R} g_{\gamma} (t-u) \, dX(u),$ 
and define (in a slight abuse of notation) the first-order scattering moments at time $t$  by 
    $S_{\gamma, p} X (t) \coloneqq \E [ | g_{\gamma} \ast dX (t)|^p ] \, .$
In the case where $X(t)$ is a compound, inhomogeneous Poisson (counting) process, $Y = dX$ will be a compound Poisson random measure and these scattering moments   will coincide with those defined in \eqref{eqn: first-order scattering moment}. 

The following theorem analyzes the small-scale first-order scattering moments when $X$ is either an $\alpha$-stable process, 
or an fBM. 
It shows the small-scale asymptotics of the corresponding scattering moments are guaranteed to differ from those of a Poisson point process when $p=1.$ We also note that both $\alpha$-stable processes and fBM have stationary increments and thus 
    $S_{\gamma, p} X (t) = SX (\gamma, p)$ for all $t$. 

\begin{theorem} \label{thm: alpha stable}
Let $1\leq p< \infty,$ and  let $\gamma_k = (s_k, \xi_k)$ be a sequence of scale-frequency pairs with $\lim_{k \rightarrow \infty} s_k = 0$ and $\lim_{k \rightarrow \infty} s_k \xi_k = L \in \R$. Then, if $X(t)$ is a symmetric $\alpha$-stable process,  $p < \alpha \leq 2,$ we have
\begin{equation*}
    \lim_{k \rightarrow \infty} \frac{S X (\gamma_k, p)}{s_k^{p/\alpha}} = \E \left[ \left| \int_0^1 w(u) e^{i L u} \, d X(u)  \right|^p \right] \, .
\end{equation*}
Similarly, if $X(t)$ is an fBM with Hurst parameter $H\in(0,1)$ and $w$ has bounded variation on $[0,1],$ then
\begin{equation*}
    \lim_{k \rightarrow \infty} \frac{SX (\gamma_k, p)}{s_k^{pH}} = \E \left[ \left| \int_0^1 w(u) e^{i L u} \, d X(u)  \right|^p \right] \, .
\end{equation*}
\end{theorem}

This theorem shows that first-order invariant scattering moments distinguish inhomogeneous, compound Poisson processes from both $\alpha$-stable processes and fractional Brownian motion except in the cases where $p = \alpha$ or $p = 1/H$. In particular, these measurements  distinguish Brownian motion,  from a Poisson point process except in the case where $p = 2$. 

\begin{figure}
    \centering
    \includegraphics[width = 0.15\textwidth, trim = {0 40pt 0 0 }, clip]{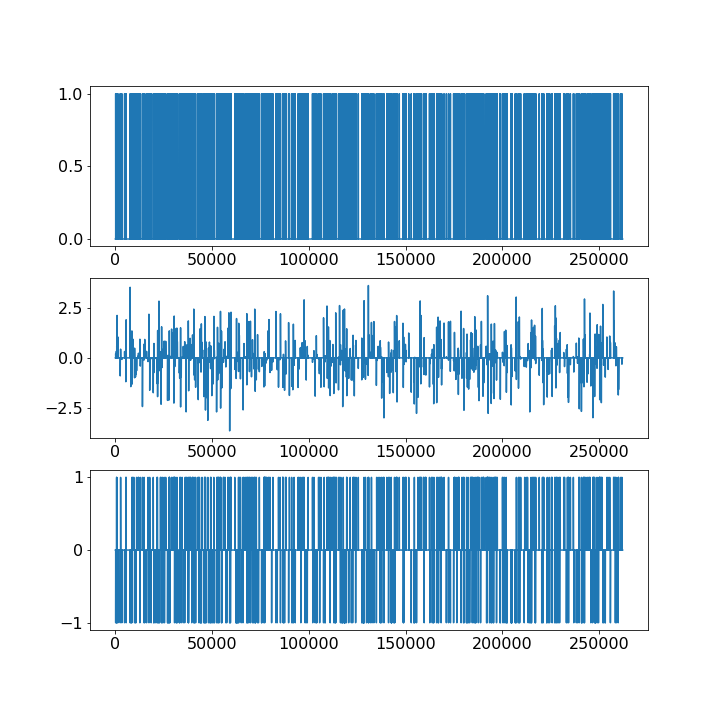}
    \includegraphics[width = 0.15\textwidth]{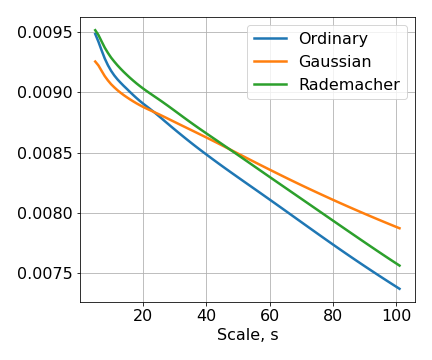}
    \includegraphics[width = 0.15\textwidth]{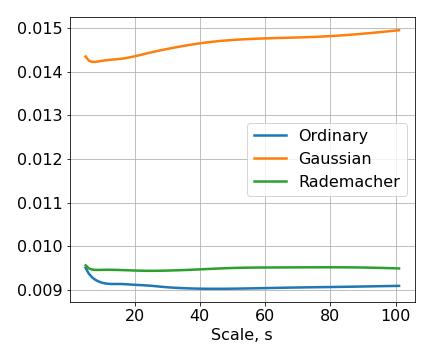}
    \caption{First-order invariant scattering moments of homogeneous compound Poisson point processes with the same intensity $\lambda_0$ and different $A_i$. \textbf{Left:} Realizations of the process with arrival rates given by \textit{Top:} $A_i=1$ \textit{Middle:} $A_i$ are normal random variables \textit{Bottom:} $A_i$ are Rademacher random variables. \textbf{Middle:}
    Plots of normalized first-order scattering $\frac{SY(s,\xi,1)}{s\|w\|_1}$ moments with $p=1$.\textbf{Right:} Plots of normalized first-order scattering $\frac{SY(s,\xi,2)}{s\|w\|_2^2}$ moments with $p=2$.}
    \label{fig:1}
\end{figure}

\section{Numerical Illustrations}
\label{sec: first-order numerics}

\begin{figure}
    \centering
    \includegraphics[width = 0.15\textwidth, trim = {0 -200pt 0 0}, clip]{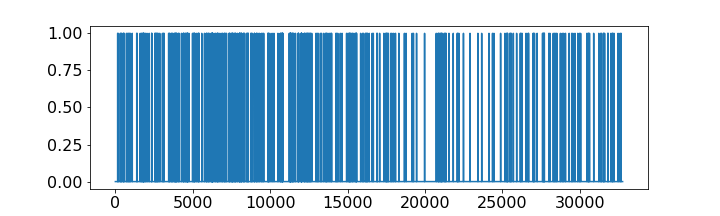}
    \includegraphics[width = 0.15\textwidth]{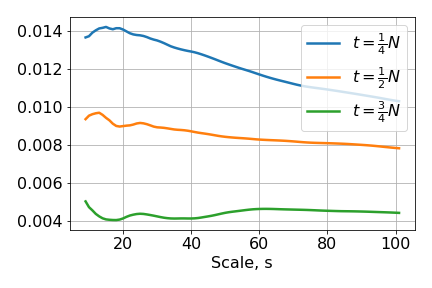}
    \caption{First-order scattering moments for inhomogeneous Poisson point processes. \textbf{Left:}  Sample realization with $\lambda(t) = 0.01 (1 + 0.5 \sin(\frac{2 \pi t}{N}))$. \textbf{Right:}  Time-dependent scattering moments $\frac{S_{\gamma, p}Y( t)}{s\Vert w\Vert_{p}^{p}}$ at $t_1 = \frac{N}{4}$, $t_2 = \frac{N}{2}$, $t_3 = \frac{3N}{4}$. Note that the scattering coefficients at times $t_1,t_2,t_3$ converges to $\lambda(t_1) = 0.015$, $\lambda(t_2) = 0.01$, $\lambda(t_3) = 0.005$. }
    \label{fig:4}
\end{figure}

\begin{figure}
    \centering
    \includegraphics[width = 0.15\textwidth, trim = {0 -100pt 0 0}, clip]{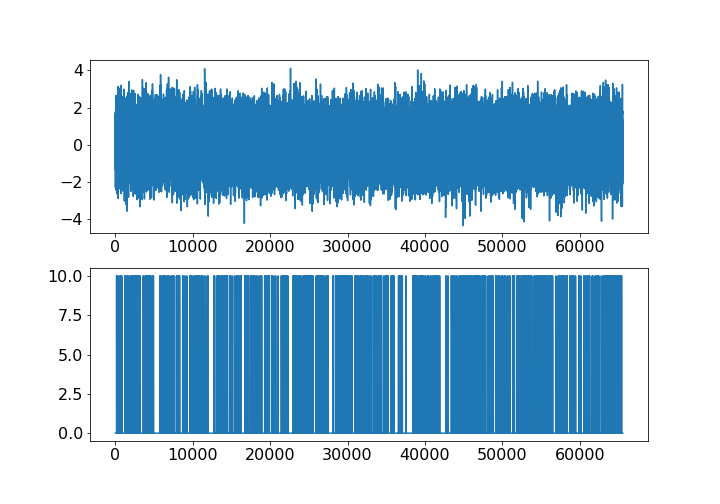}
    \includegraphics[width = 0.15\textwidth]{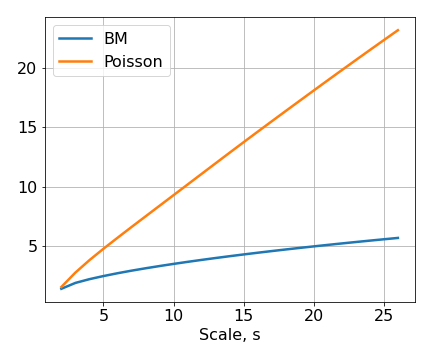}
    \includegraphics[width = 0.15\textwidth]{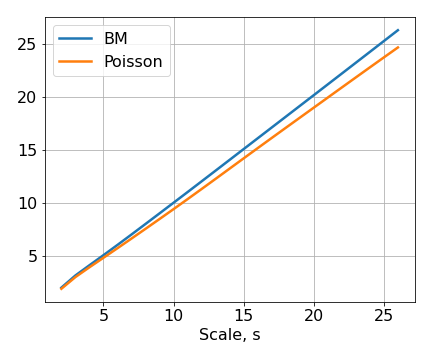}
    \caption{First-order invariant scattering moments for standard Brownian motion and Poisson point process. \textbf{Left:} Sample realizations  \textit{Top:} Brownian motion. \textit{Bottom:} Ordinary Poisson point process. \textbf{Middle:} Normalized scattering moments $\frac{SY_{\text{poisson}}(x,\xi,p)}{\lambda\mathbb{E}|A_1|^p\|w\|_p^p}$ and $\frac{SX_{\text{BM}}(s,\xi,p)}{\lambda\mathbb{E}|Z|^p\|w\|_p^p}$  for Poisson and BM with $p = 1$. \textbf{Right:} The same but with $p=2$.}
    \label{fig:5}
\end{figure}
We carry out several experiments to numerically validate the previously stated results. 
In all of our experiments, we hold the frequency $\xi$ constant while letting $s$ decrease to zero. 

\noindent\textbf{Compound Poisson point processes with the same intensities:}
We generated three  homogeneous compound Poisson point processes, all with intensity $\lambda (t) \equiv \lambda_0 = 0.01$, where the charges $A_{1,j}$, $A_{2,j}$, and $A_{3,j}$  are chosen so that $A_{1,j}=1$ uniformly, $A_{2,j}\sim \mathcal{N}(0, \sqrt{\frac{\pi}{2}}) $, and $A_{3,j}$ are Rademacher random variables. 
The charges of the three signals have the same first moment $\mathbb{E}[|A_{i,j}|] = 1$ and different second moment with $\mathbb{E}[|A_{1,j}|^2] = \mathbb{E}[|A_{3,j}|^2] = 1$ and  $\mathbb{E}[|A_{2,j}|^2] = \frac{\pi}{2}$. As predicted by  Theorem \ref{thm: scattering 1st order small scale limit}, Figure \ref{fig:1} shows first-order scattering moments will not be able to distinguish between the three processes with $p=1$, but  will distinguish the process with Gaussian charges from the other two when $p=2$. 

\noindent\textbf{Inhomogeneous, non-compound Poisson point processes:}
We also consider an inhomogeneous, non-compound Poisson point processes with intensity function $\lambda (t) = 0.01 (1 + 0.5 \sin(\frac{2 \pi t}{N}))$ (where we  estimate $S_{\gamma, p}Y( t)$, by averaging over 1000 realizations). Figure \ref{fig:4} plots the scattering moments for the inhomogeneous process at different times, and shows they align with the true intensity function.

\noindent\textbf{Poisson point process and self similar process:}
We consider a Brownian motion compared to a Poisson point process with intensity $\lambda = 0.01$ and charges $(A)_{j=1}^{\infty} \equiv 10$. Figure \ref{fig:5} shows the convergence rate of the first-order scattering moments can distinguish these processes when $p=1$ but not when $p=2.$
\section{Conclusion}
\label{sec: conclusion}
We have constructed Gabor-filter scattering transforms for random measures on $\mathbb{R}^d.$ 
Our work is closely related to \cite{bruna:scatMoments2015} but considers more general classes of filters and point processes (although  we note that \cite{bruna:scatMoments2015} provides a more detailed analysis of self-similar processes).
In future work, it  would be interesting to explore the use of these measurements for tasks such as, e.g., synthesizing new signals. 

\clearpage
\bibliographystyle{plain}

\begin{thebibliography}{10}

\bibitem{mallat:rotoScat2013}
Laurent Sifre and St\'{e}phane Mallat,
\newblock ``Rotation, scaling and deformation invariant scattering for texture
  discrimination,''
\newblock in {\em The IEEE Conference on Computer Vision and Pattern
  Recognition (CVPR)}, June 2013.

\bibitem{NIPS2015_5633}
Leon Gatys, Alexander~S Ecker, and Matthias Bethge,
\newblock ``Texture synthesis using convolutional neural networks,''
\newblock in {\em Advances in Neural Information Processing Systems 28}, 2015,
  pp. 262--270.

\bibitem{arXiv:1806.08002}
Joseph Antognini, Matt Hoffman, and Ron~J. Weiss,
\newblock ``Synthesizing diverse, high-quality audio textures,''
\newblock arXiv:1806.08002, 2018.

\bibitem{pmlr-v80-binkowski18a}
Mikolaj Binkowski, Gautier Marti, and Philippe Donnat,
\newblock ``Autoregressive convolutional neural networks for asynchronous time
  series,''
\newblock in {\em Proceedings of the 35th International Conference on Machine
  Learning}, Jennifer Dy and Andreas Krause, Eds., Stockholmsm{\"a}ssan,
  Stockholm Sweden, 10--15 Jul 2018, vol.~80 of {\em Proceedings of Machine
  Learning Research}, pp. 580--589, PMLR.

\bibitem{arXiv:1812.08265}
Antoine Brochard, Bart{\l}omiej B{\l}aszczyszyn, St\'{e}phane Mallat, and Sixin
  Zhang,
\newblock ``Statistical learning of geometric characteristics of wireless
  networks,''
\newblock arXiv:1812.08265, 2018.

\bibitem{haenggi:stochGeoWireless2009}
Martin Haenggi, Jeffrey~G. Andrews, Fran\c{c}ois Baccelli, Olivier Dousse, and
  Massimo Franceschetti,
\newblock ``Stochastic geometry and random graphs for the analysis and design
  of wireless networks,''
\newblock {\em IEEE Journal on Selected Areas in Communications}, vol. 27, no.
  7, pp. 1029--1046, 2009.

\bibitem{genet:Forestry2014}
Astrid Genet, Pavel Grabarnik, Olga Sekretenko, and David Pothier,
\newblock ``Incorporating the mechanisms underlying inter-tree competition into
  a random point process model to improve spatial tree pattern analysis in
  forestry,''
\newblock {\em Ecological Modelling}, vol. 288, pp. 143–154, 09 2014.

\bibitem{schoenberg2016}
Frederic~Paik Schoenberg,
\newblock ``A note on the consistent estimation of spatial-temporal point
  process parameters,''
\newblock {\em Statistica Sinica}, 2016.

\bibitem{fromionGene2013}
V.~Fromion, E.~Leoncini, and P.~Robert,
\newblock ``Stochastic gene expression in cells: A point process approach,''
\newblock {\em SIAM Journal on Applied Mathematics}, vol. 73, no. 1, pp.
  195--211, 2013.

\bibitem{mallat:scattering2012}
St{\'e}phane Mallat,
\newblock ``Group invariant scattering,''
\newblock {\em Communications on Pure and Applied Mathematics}, vol. 65, no.
  10, pp. 1331--1398, October 2012.

\bibitem{anden:deepScatSpectrum2014}
Joakim And\'{e}n and St\'{e}phane Mallat,
\newblock ``Deep scattering spectrum,''
\newblock {\em IEEE Transactions on Signal Processing}, vol. 62, no. 16, pp.
  4114--4128, August 2014.

\bibitem{oyallon:scatObjectClass2014}
Edouard Oyallon and St\'{e}phane Mallat,
\newblock ``Deep roto-translation scattering for object classification,''
\newblock in {\em Proceedings in IEEE CVPR 2015 conference}, 2015,
\newblock arXiv:1412.8659.

\bibitem{brumwell:steerableScatLiSi2018}
Xavier Brumwell, Paul Sinz, Kwang~Jin Kim, Yue Qi, and Matthew Hirn,
\newblock ``Steerable wavelet scattering for 3{D} atomic systems with
  application to {L}i-{S}i energy prediction,''
\newblock in {\em NeurIPS Workshop on Machine Learning for Molecules and
  Materials}, 2018.

\bibitem{bruna:scatMoments2015}
Joan Bruna, St\'{e}phane Mallat, Emmanuel Bacry, and Jean-Francois Muzy,
\newblock ``Intermittent process analysis with scattering moments,''
\newblock {\em Annals of Statistics}, vol. 43, no. 1, pp. 323 -- 351, 2015.

\bibitem{Daley}
D.~J. Daley and D.~Vere-Jones,
\newblock {\em An introduction to the theory of point processes. {V}ol. {I}},
\newblock Probability and its Applications (New York). Springer-Verlag, New
  York, second edition, 2003,
\newblock Elementary theory and methods.

\bibitem{Burkholder88}
Donald~L. Burkholder,
\newblock ``Sharp inequalities for martingales and stochastic integrals,''
\newblock in {\em Colloque Paul L\'evy sur les processus stochastiques}, number
  157-158 in Ast\'erisque, pp. 75--94. Soci\'et\'e math\'ematique de France,
  1988.

\bibitem{banuelos1995}
Rodrigo Ba\~{n}uelos and Gang Wang,
\newblock ``Sharp inequalities for martingales with applications to the
  beurling-ahlfors and riesz transforms,''
\newblock {\em Duke Math. J.}, vol. 80, no. 3, pp. 575--600, 12 1995.

\bibitem{young1936}
L.~C. Young,
\newblock ``An inequality of the {H}{\"o}lder type, connected with {S}tieltjes
  integration,''
\newblock {\em Acta Math.}, vol. 67, pp. 251--282, 1936.

\bibitem{Garsia1970}
A.~M. Garsia, E.~Rodemich, and H.~Rumsey Jr.,
\newblock ``A real variable lemma and the continuity of paths of some
  {Gaussian} processes,''
\newblock {\em Indiana University Mathematics Journal}, vol. 20, no. 6, pp.
  565--578, 1970.

\bibitem{Shevchenko2015}
G.~{Shevchenko},
\newblock ``{Fractional Brownian motion in a nutshell},''
\newblock in {\em International Journal of Modern Physics Conference Series},
  Jan. 2015, vol.~36 of {\em International Journal of Modern Physics Conference
  Series}, p. 1560002.

\end{thebibliography}

\newpage

\appendix

\section{Proof of Theorem 1}

To prove Theorem \ref{thm: scattering 1st order taylor} we will need the following lemma.

\begin{lemma} \label{lem: poisson variable exp tail}
Let $Z$ be a Poisson random variable with parameter $\lambda$. Then for all $\alpha \in \R$, $m \in \mathbb{N}$, $0<\lambda<1$, we have 
\begin{equation*}
    \E \left[ Z^{\alpha} \mathbbm{1}_{\{Z > m\}} \right] = \sum_{k=m+1}^{\infty} e^{-\lambda} \frac{\lambda^k}{k!} k^{\alpha} \leq C_{m, \alpha} \lambda^{m+1}.
\end{equation*}
\end{lemma}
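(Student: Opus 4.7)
The plan is to exploit the smallness of $\lambda$ (specifically $0 < \lambda < 1$) to extract the factor $\lambda^{m+1}$ directly from each term of the series, leaving behind a purely combinatorial series in $k$ that can be bounded by a constant depending only on $m$ and $\alpha$.

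First I would drop the harmless factor $e^{-\lambda} \leq 1$ out front. Next, for each $k \geq m+1$, I would write $\lambda^k = \lambda^{m+1} \cdot \lambda^{k-m-1}$ and use the hypothesis $0 < \lambda < 1$, which gives $\lambda^{k-m-1} \leq 1$ since $k-m-1 \geq 0$. Factoring $\lambda^{m+1}$ out of the sum then yields
\begin{equation*}
    \sum_{k=m+1}^{\infty} e^{-\lambda} \frac{\lambda^k}{k!} k^{\alpha} \leq \lambda^{m+1} \sum_{k=m+1}^{\infty} \frac{k^{\alpha}}{k!} \, .
\end{equation*}

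It remains to verify that $C_{m,\alpha} := \sum_{k=m+1}^{\infty} k^{\alpha}/k!$ is finite. This is standard: the ratio of consecutive terms is $((k+1)/k)^{\alpha}/(k+1) \to 0$ as $k \to \infty$, so the series converges by the ratio test for every real $\alpha$ (the factorial dominates any polynomial growth in $k$). Taking this $C_{m,\alpha}$ as the constant in the bound completes the proof.

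There is no real obstacle here; the only subtlety is remembering that $\alpha$ is allowed to be an arbitrary real number (possibly negative or large), but since we only sum over $k \geq m+1 \geq 1$, the terms $k^\alpha$ are well defined, and the factorial in the denominator handles any $\alpha$.
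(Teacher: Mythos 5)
Your proof is correct and follows essentially the same route as the paper's: both factor out $\lambda^{m+1}$, bound the leftover powers of $\lambda$ and the factor $e^{-\lambda}$ by $1$ using $0<\lambda<1$, and observe that the remaining series $\sum_{k\ge m+1} k^{\alpha}/k!$ converges because the factorial dominates $k^{\alpha}$ for any real $\alpha$. The only difference is cosmetic (the paper reindexes the sum to start at $k=0$), and the resulting constant $C_{m,\alpha}$ is identical.
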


\begin{proof}
For $0 < \lambda < 1$ and $k \in \mathbb{N}$, $e^{-\lambda} \lambda^k \leq 1$. Therefore,
\begin{align*}
    \E \left[ Z^{\alpha} \mathbbm{1}_{\{Z > m\}} \right] &= \sum_{k=m+1}^{\infty} e^{-\lambda} \frac{\lambda^k}{k!} k^{\alpha} \\
    &= \lambda^{m+1} \sum_{k=0}^{\infty} e^{-\lambda} \frac{\lambda^k}{(k + m + 1)!} (k + m + 1)^{\alpha} \\
    &\leq \lambda^{m+1} \sum_{k=0}^{\infty} \frac{(k + m + 1)^{\alpha}}{(k + m + 1)!} \\
    &= C_{\alpha, m} \lambda^{m+1} \, .
\end{align*}
\end{proof}

\begin{proof}[The proof of Theorem \ref{thm: scattering 1st order taylor}]
Recalling the definitions of $Y (dt)$ and $S_{\gamma, p} Y (t)$, and setting $N_s(t) = N\left([t-s, t]^d\right)$, we see
\begin{align*}
    S_{\gamma, p} Y (t) &= \E \left[ \left| \int_{[s-t, t]^d} w \left( \frac{t-u}{s} \right) e^{i \xi \cdot (t-u)} \, Y (du) \right|^p \right] \\
    &= \E \left[ \left| \sum_{j=1}^{N_s(t)} A_j w \left( \frac{t-t_j}{s} \right) e^{i \xi \cdot (t-t_j)} \right|^p \right] \, ,
\end{align*}
where $t_1, t_2, \ldots t_{N_s(t)}$ are the points $N(t)$ in $[t-s, t]^d$. Conditioned on the event that $N_s(t) = k$, the locations of the $k$ points on $[t-s, t]^d$ are distributed as i.i.d. random variables $Z_1, \ldots, Z_k$ taking values in $[t-s, t]^d$ with density
\begin{equation*}
    p_Z (z) = \frac{\lambda (z)}{\Lambda_s (t)} \, , \quad z \in [t-s, t]^d \, .
\end{equation*}
Therefore, the random variables 
\begin{equation*}
    V_i := \frac{t - Z_i}{s}
\end{equation*}
take values in the unit cube $[0,1]^d$ and have density
\begin{equation*}
    p_V (v) = \frac{s^d}{\Lambda_s(t)} \lambda (t - vs) \, , \quad v \in [0,1]^d \, .
\end{equation*}
Note that in the special case that $N$ is homogeneous, i.e. $\lambda (t) \equiv \lambda_0$ is constant, the $V_i$ are uniform random variables on $[0,1]^d$.

Therefore, computing the conditional expectation, we have for $k\geq 1$ 
\begin{align}
    &\E \left[ \left| \sum_{j=1}^{N_s(t)} A_j w \left( \frac{t-t_j}{s} \right) e^{i \xi \cdot (t-t_j)} \right|^p : N_s(t) = k \right]\\ =& \E \left[ \left| \sum_{j=1}^k A_j w (V_j) e^{i s \xi \cdot V_j} \right|^p \right] \nonumber \\
    \leq& \frac{\| \lambda \|_{\infty}}{\lambda_{\min}} k^p \E [|A_1|^p] \| w \|_p^p \, , \label{eqn: proof A1}
\end{align}
where \eqref{eqn: proof A1} follows from (i) the independence of the random variables $A_j$ and $V_j$; (ii) the fact that for any sequence of i.i.d. random variables $Z_1, Z_2, \ldots$,
\begin{equation*}
    \E \left[ \left| \sum_{n=1}^k Z_n \right|^p \right] \leq k^{p-1} \E \left[ \sum_{n=1}^k |Z_n|^p \right] = k^p \E [ |Z_1|^p ] \, ;
\end{equation*}
and (iii) the fact that
\begin{equation*}
    \E [ |w(V_i)|^p ] = \int_{[0,1]^d} |w (v)|^p p_V (v) \, dv \leq \frac{\| \lambda \|_{\infty}}{\lambda_{\min}} \| w \|_p^p \, .
\end{equation*}
Therefore, since $\mathbb{P} [ N_s(t) = k ] = e^{-\Lambda_s(t)} \cdot \nicefrac{(\Lambda_s(t))^k}{k!}$,
\begin{align*}
    \E &\left[ \left| \sum_{j=1}^{N_s(t)} A_j w \left( \frac{t-t_j}{s} \right) e^{i \xi \cdot (t-t_j)} \right|^p \right] = \\
    =& \sum_{k=0}^{\infty} e^{-\Lambda_s(t)} \frac{(\Lambda_s(t))^k}{k!} \E \left[ \left| \sum_{j=1}^{N_s(t)} A_j w \left( \frac{t-t_j}{s} \right) e^{i \xi \cdot (t - t_j)} \right|^p : N_s(t) = k \right] \\
    =& \sum_{k=1}^{\infty} e^{-\Lambda_s(t)} \frac{(\Lambda_s(t))^k}{k!} \E \left[ \left| \sum_{j=1}^k A_j w (V_j) e^{i s \xi \cdot V_j} \right|^p \right] \\
    =& \sum_{k=1}^m e^{-\Lambda_s(t)} \frac{(\Lambda_s(t))^k}{k!} \E \left[ \left| \sum_{j=1}^k A_j w (V_j) e^{i s \xi \cdot V_j} \right|^p \right] + \epsilon (m, s, \xi, t) \, ,
\end{align*}
where
\begin{equation*}
    \epsilon (m, s, t, \xi) := \sum_{k=m+1}^{\infty} e^{-\Lambda_s(t)} \frac{(\Lambda_s(t))^k}{k!} \E \left[ \left| \sum_{j=1}^k A_j w (V_j) e^{i s \xi \cdot V_j} \right|^p \right] \, .
\end{equation*}

By \eqref{eqn: proof A1} and Lemma \ref{lem: poisson variable exp tail}, if $s$ is small enough so that $\Lambda_s(t) \leq s^d \| \lambda \|_{\infty} < 1$, then:
\begin{align*}
    \epsilon (m, s, \xi, t) &= \sum_{k=m+1}^{\infty} e^{-\Lambda_s(t)} \frac{(\Lambda_s(t))^k}{k!} \E \left[ \left| \sum_{j=1}^k A_j w (V_j) e^{i s \xi \cdot V_j} \right|^p \right] \\
    &\leq \frac{\| \lambda \|_{\infty}}{\lambda_{\min}} \E [ |A_1|^p ] \| w \|_p^p \sum_{k=m+1}^{\infty} e^{-\Lambda_s(t)} \frac{ (\Lambda_s(t))^k}{k!} k^p \\
    &\leq C_{m,p} \frac{\| \lambda \|_{\infty}}{\lambda_{\min}} \E [ |A_1|^p ] \| w \|_p^p (\Lambda_s(t))^{m+1} \\
    &\leq C_{m,p} \frac{\| \lambda \|_{\infty}}{\lambda_{\min}} \E [ |A_1|^p ] \| w \|_p^p \| \lambda \|_{\infty}^{m+1} s^{d(m+1)} \, .
\end{align*}
\end{proof}

\section{Proof of Theorem  2}

\begin{proof}
Let $(s_k, \xi_k)$ be a sequence of scale and frequency pairs such that $\lim_{k \rightarrow \infty} s_k = 0$. Applying Theorem \ref{thm: scattering 1st order taylor} with $m = 1$, we obtain:
\begin{align*}
    &\frac{S_{\gamma_k, p} Y (t)}{s_k^d}\\ =& e^{-\Lambda_{s_k} (t)} \frac{ \Lambda_{s_k} (t)}{s_k^d} \E \left[ \left| A_1 w(V_{1,k}) e^{i s \xi \cdot V_{1,k}} \right|^p \right] + \frac{\epsilon (1, s_k, \xi_k, t)}{s_k^d}\\
    =& e^{-\Lambda_{s_k} (t)} \frac{ \Lambda_{s_k} (t)}{s_k^d} \E [ |A_1|^p ] \E [ |w(V_{1,k})|^p ] + \frac{\epsilon (1, s_k, \xi_k, t)}{s_k^d} \, ,
\end{align*}
where we write $V_{1,k} = V_1$ to emphasize the fact that the density of $V_{1,k}$ is:
\begin{equation*}
    p_{V_k} (v) = \frac{s_k^d}{\Lambda_{s_k}(t)} \lambda (t - vs_k) \, .
\end{equation*}
Using the error bound \eqref{eqn: error bound}, we see that:
\begin{equation*}
    \lim_{k \rightarrow \infty} \frac{ \epsilon (1, s_k, \xi_k, t)}{s_k^d} = 0 \, .
\end{equation*}
Furthermore, since $0 \leq \Lambda_{s_k} (t) \leq s_k^d \| \lambda \|_{\infty}$, we observe that:
\begin{equation*}
    \lim_{k \rightarrow \infty} e^{-\Lambda_{s_k}(t)} = 1 \, ,
\end{equation*}
and by the continuity of $\lambda (t)$, 
\begin{equation} \label{eqn: proof B1}
    \lim_{k \rightarrow \infty} \frac{\Lambda_{s_k}(t)}{s_k^d} = \lim_{k \rightarrow \infty} \frac{1}{s_k^d} \int_{[s_k-t, t]^d} \lambda (u) \, du = \lambda (t) \, .
\end{equation}
Finally, by the continuity of $\lambda (t)$, we see that
\begin{equation} \label{eqn: props of pV}
    p_{V_k} (v) \leq \frac{ \| \lambda \|_{\infty} }{\lambda_{\min}} \quad \text{and} \quad \lim_{k \rightarrow \infty} p_{V_k} (v) = 1 \, , \quad \forall \, v \in [0,1]^d \, .
\end{equation}
Therefore, by the bounded convergence theorem, 
\begin{align*}
    \lim_{k \rightarrow \infty} \E [ |w(V_1)|^p] &= \lim_{k \rightarrow \infty} \int_{[0,1]^d} |w(v)|^p p_{V_k}(v) \, dv \\&= \int_{[0,1]^d} |w(v)|^p \lim_{k \rightarrow \infty} p_{V_k} (v) \, dv \\&= \| w \|_p^p \, .
\end{align*}
That completes the proof of \eqref{eqn: location scattering small scales}. 

To prove \eqref{eqn: invariant small scales}, we assume that $\lambda (t)$ is periodic with period $T$ along each coordinate and again use Theorem \ref{thm: scattering 1st order taylor} with $m = 1$ to observe,
\begin{align*}
    &\frac{S Y (s_k, \xi_k, p)}{s_k^d}\\ &= \E [|A_1|^p] \frac{1}{T^d} \int_{[0,T]^d} e^{-\Lambda_{s_k}(t)} \frac{\Lambda_{s_k}(t)}{s_k^d} \times \\
    &\quad \int_{[0,1]^d} |w(v)|^p p_{V_k} (v) \, dv \, dt + \frac{1}{T^d} \int_{[0,1]^d} \frac{\epsilon (1, s_k, \xi_k, t)}{s_k^d} \, dt \, .
\end{align*}
By \eqref{eqn: error bound}, the second integral converges to zero as $k \rightarrow \infty$. Therefore,
\begin{equation*}
    \lim_{k \rightarrow \infty} \frac{S Y (s_k, \xi_k, p)}{s_k^d} = \E [|A_1|^p] \| w \|_p^p \frac{1}{T^d} \int_{[0,T]^d} \lambda (t) \, dt \, ,
\end{equation*}
by the continuity of $\lambda (t)$ and the bounded convergence theorem. 
\end{proof}

\section{Proof of Theorem 3}

\begin{proof} We apply Theorem \ref{thm: scattering 1st order taylor} with $m = 2$ and obtain:
\begin{align}
    &S_{\gamma_k, p} Y (t)\\ =& e^{-\Lambda_{s_k}(t)} \Lambda_{s_k}(t) \E [|A_1|^p] \E[|w(V_{1,k})|^p] \label{eqn: proof C1} \\
    +& e^{-\Lambda_{s_k}(t)} \frac{(\Lambda_{s_k}(t))^2}{2} \E \left[ \left| A_1 w (V_{1,k}) e^{i s_k \xi_k \cdot V_{1,k}} + A_2 w (V_{2,k}) e^{i s_k \xi_k \cdot V_{2,k}} \right|^p \right] \\+& \epsilon (2, s_k, \xi_k, t) \, , \nonumber
\end{align}
where $V_{i,k}$, $i = 1,2$, are random variables taking values on the unit cube $ [0,1]^d$ with densities,
\begin{equation*}
    p_{V_k} (v) = \frac{s_k^d}{\Lambda_{s_k}(t)} \lambda (t - vs_k) \, .
\end{equation*}
Dividing both sides in \eqref{eqn: proof C1} by $s_k^{2d} \| w \|_p^p \E [|A_1|^p]$ and subtracting $\frac{\Lambda_{s_k}(t)}{s_k^{2d}} \frac{\E [|w (V_{1,k})|^p]}{\| w \|_p^p}$ yields:
\begin{align}
    &\frac{S_{\gamma_k, p} Y (t)}{s_k^{2d} \| w \|_p^p \E [|A_1|^p]} - \frac{ \Lambda_{s_k} (t)}{s_k^{2d}} \frac{\E [ |w(V_{1,k})|^p]}{\| w \|_p^p}\\ =& \frac{e^{-\Lambda_{s_k}(t)} \Lambda_{s_k}(t) - \Lambda_{s_k}(t)}{s_k^{2d}} \frac{\E [ |w(V_{1,k})|^p]}{\| w \|_p^p} \label{eqn: proof C2} \\
    +& e^{-\Lambda_{s_k}(t)} \frac{(\Lambda_{s_k}(t))^2}{s_k^{2d}} \frac{\E \left[ \left| A_1 w(V_{1,k}) e^{i s_k \xi_k \cdot V_{1,k}} + A_2 w(V_{2,k}) e^{i s_k \xi_k \cdot V_{2,k}} \right|^p \right]}{2 \| w \|_p^p \E [|A_1|^p]} \nonumber \\
    &\quad\quad+ \frac{\epsilon (2, s_k, \xi_k, t)}{s_k^{2d} \| w \|_p^p \E [|A_1|^p]} \, . \nonumber 
\end{align}
Using the error bound \eqref{eqn: error bound}, 
\begin{equation} \label{eqn: proof C5}
    \lim_{k \rightarrow \infty} \frac{\epsilon (2, s_k, \xi_k, t)}{s_k^{2d} \| w \|_p^p \E [|A_1|^p]} = 0 \, ,
\end{equation}
at a rate independent of $t$. Recalling \eqref{eqn: props of pV} from the proof of Theorem \ref{thm: scattering 1st order small scale limit}, we use the fact that $\lim_{k \rightarrow \infty} p_{V_k} \equiv 1$ and the bounded convergence theorem to conclude, 
\begin{align} \label{eqn: proof C4}
    &\lim_{k \rightarrow \infty} \E \left[ \left| A_1 w(V_{1,k}) e^{i s_k \xi_k \cdot V_{1,k}} + A_2 w(V_{2,k}) e^{i s_k \xi_k \cdot V_{2,k}} \right|^p \right]\\ &= \E \left[ \left| A_1 w (U_1) e^{i L \cdot U_1} + A_2 w(U_2) e^{i L \cdot U_2} \right|^p \right] \, ,
\end{align}
where $U_i$, $i=1,2$, are uniform random variables on the unit cube and $L = \lim_{k \rightarrow \infty} s_k \xi_k$. Similarly,
\begin{equation} \label{eqn: V1k limit}
    \lim_{k \rightarrow \infty} \frac{\E [|w(V_{1,k})|^p]}{\| w \|_p^p} = 1 \, .
\end{equation}
Lastly, recalling that $s_k \rightarrow 0$ as $k \rightarrow \infty$ and using \eqref{eqn: proof B1} from the proof of Theorem \ref{thm: scattering 1st order small scale limit}, we see
\begin{align}
    &\lim_{k \rightarrow \infty} \frac{e^{-\Lambda_{s_k}(t)} \Lambda_{s_k}(t) - \Lambda_{s_k}(t)}{s_k^{2d}} \\=& \lim_{k \rightarrow \infty} \left( \frac{\Lambda_{s_k}(t)}{s_k^d} \right) \lim_{k \rightarrow \infty} \left( \frac{e^{-\Lambda_{s_k}(t)} - 1}{s_k^d} \right) \nonumber \\
    =& \lambda (t) \lim_{k \rightarrow \infty} \left( \frac{e^{-\Lambda_{s_k}(t)} - 1}{s_k^d} \right) \nonumber \\
    =& -\lambda (t)^2 \, . \label{eqn: proof C3}
\end{align}

Now we integrate both sides of \eqref{eqn: proof C2} over $[0,T]^d$ and divide by $T^d$. Taking the limit as $k \rightarrow \infty$, on the left hand side we get:
\begin{align*}
    &\hskip -10pt \lim_{k \rightarrow \infty} \frac{1}{T^d} \int_{[0,T]^d} \left( \frac{S_{\gamma_k, p} Y (t)}{s_k^{2d} \| w \|_p^p \E [|A_1|^p]} - \frac{ \Lambda_{s_k} (t)}{s_k^{2d}} \frac{\E [ |w(V_{1,k})|^p]}{\| w \|_p^p} \right) dt \\
    &= \lim_{k \rightarrow \infty} \left( \frac{SY (s_k, \xi_k, p)}{s_k^{2d} \| w \|_p^p \E [|A_1|^p]} - \frac{ \E [|w (V_{1,k})|^p]}{\| w \|_p^pT^d} \int_{[0,T]^d} \frac{\Lambda_{s_k}(t)}{s_k^{2d}} \, dt \right) \\
    &= \lim_{k \rightarrow \infty} \left( \frac{SY (s_k, \xi_k, p)}{s_k^{2d} \E [|w (V_{1,k})|^p] \E [|A_1|^p]} - \frac{1}{T^d} \int_{[0,T]^d} \frac{\Lambda_{s_k}(t)}{s_k^{2d}} \, dt \right) \, ,
\end{align*}
where we used the definition of the invariant scattering moments and \eqref{eqn: V1k limit}. On the right hand side of \eqref{eqn: proof C2}, we use \eqref{eqn: V1k limit}, \eqref{eqn: proof C3} and the dominated convergence theorem to see that the first term is:
\begin{align*}
    &\lim_{k \rightarrow \infty} \frac{1}{T^d} \int_{[0,T]^d} \frac{e^{-\Lambda_{s_k}(t)} \Lambda_{s_k}(t) - \Lambda_{s_k}(t)}{s_k^{2d}} \frac{\E [ |w(V_{1,k})|^p]}{\| w \|_p^p} \, dt \\&= \lim_{k \rightarrow \infty} \frac{1}{T^d} \int_{[0,T]^d} \frac{e^{-\Lambda_{s_k}(t)} \Lambda_{s_k}(t) - \Lambda_{s_k}(t)}{s_k^{2d}} \, dt \\
    &= -\frac{1}{T^d} \int_{[0,T]^d} \lambda (t)^2 \, dt \, .
\end{align*}
Using \eqref{eqn: proof B1}, \eqref{eqn: proof C4}, and the bounded convergence theorem, the second term of \eqref{eqn: proof C2} is:
\begin{align*}
    &\lim_{k \rightarrow \infty} \frac{1}{T^d} \int_{[0,T]^d} e^{-\Lambda_{s_k}(t)} X_k \, dt \\
    &= \frac{\E [ |A_1 w (U_1) e^{i L \cdot U_1} + A_2 w (U_2) e^{i L \cdot U_2} |^p ]}{2 T^d\| w \|_p^p \E [|A_1|^p] } \left(  \int_{[0,T]^d} \lambda(t)^2 \, dt \right)
\end{align*}
where 
\begin{equation*}
    X_k=\frac{\E \left[ \left| A_1 w(V_{1,k}) e^{i s_k \xi_k \cdot V_{1,k}} + A_2 w(V_{2,k}) e^{i s_k \xi_k \cdot V_{2,k}} \right|^p \right]}{2 \| w \|_p^p \E [|A_1|^p]}.
\end{equation*}
Finally, the third term of \eqref{eqn: proof C2} goes to zero using the bounded convergence theorem and \eqref{eqn: proof C5}. Putting together the left and right hand sides of \eqref{eqn: proof C2} with these calculations finishes the proof.
\end{proof}

\section{Proof of Theorem 4}

\begin{proof} As in the proof of Theorem \ref{thm: scattering 1st order taylor}, let $N_s(t) = N\left([t-s, t]^d\right)$ denote the number of points in the cube $[t-s,t]^d$. Then since the support of $w$ is contained in $[0,1]^d$,
\begin{align*}
    \left(g_{\gamma_k} \ast Y\right) (t) &= \int_{[t-s_k, t]^d} w \left( \frac{t-u}{s_k} \right) e^{i \xi_k \cdot (t-u)} \, Y (du)\\ &= \sum_{j=1}^{N_{s_k}(t)} A_j w \left ( \frac{t - t_j}{s_k} \right) e^{i \xi_k \cdot (t - t_j)} \, ,
\end{align*}
where $t_1, t_2, \ldots, t_{N_{s_k}(t)}$ are the points of $N$ in $[t - s_k, t]^d$. Therefore, in the event that $N_{s_k}(t) = 1$, 
\begin{equation*}
    | \left(g_{\gamma_k} \ast Y\right) (t)|^p = \left(|g_{\gamma_k}|^p \ast |Y|^p\right) (t) \, ,
\end{equation*}
and so, partitioning the space of possible outcomes based on $N_{s_k}(t)$, we obtain:
\begin{align*}
    &| \left(g_{\gamma_k} \ast Y\right) (t)|^p\\ =& | \left(g_{\gamma_k} \ast Y\right) (t) \cdot \mathbbm{1}_{\{ N_{s_k}(t) = 1 \}} + \left(g_{\gamma_k} \ast Y\right)(t) \cdot \mathbbm{1}_{\{ N_{s_k}(t) > 1 \}} |^p \\
    =& | \left(g_{\gamma_k} \ast Y\right) (t) \cdot \mathbbm{1}_{\{ N_{s_k}(t) = 1 \}}|^p + |\left(g_{\gamma_k} \ast Y\right)(t) \cdot \mathbbm{1}_{\{ N_{s_k}(t) > 1 \}} |^p \\
    =&  \left(|g_{\gamma_k}|^p \ast |Y|^p\right) (t) \cdot \mathbbm{1}_{\{ N_{s_k}(t) = 1 \}} + |\left(g_{\gamma_k} \ast Y\right)(t) \cdot \mathbbm{1}_{\{ N_{s_k}(t) > 1 \}} |^p \\
    =&  \left(|g_{\gamma_k}|^p \ast |Y|^p\right) (t) + e_k (t) \, ,
\end{align*}
where
\begin{equation*}
    e_k (t) := |\left(g_{\gamma_k} \ast Y\right)(t) \cdot \mathbbm{1}_{\{ N_{s_k}(t) > 1 \}} |^p -  \left(|g_{\gamma_k}|^p \ast |Y|^p\right) (t) \cdot \mathbbm{1}_{\{ N_{s_k}(t) > 1 \}}
\end{equation*}
Using the above, we can write the second order convolution term as:
\begin{equation*}
    \left(g_{\gamma_k'} \ast |g_{\gamma_k} \ast Y|\right)(t) = \left(g_{\gamma_k'} \ast |g_{\gamma_k}|^p \ast |Y|^p\right) (t) + \left(g_{\gamma_k'} \ast e_k\right) (t) \, .
\end{equation*}
The following lemma implies that $\left(g_{\gamma_k'} \ast e_k\right) (t)$ decays rapidly in $\Lb^{p'}$ at a rate independent of $t$.

\begin{lemma} \label{lem: 2nd order lemma 1}
There exists $\delta > 0$, independent of $t$, such that if $s_k < \delta$, 
\begin{equation*}
    \E \left[ \left|\left(g_{\gamma_k'} \ast e_k\right) (t)\right|^p \right] \leq C (p, p', w, c, L) \frac{ \| \lambda \|_{\infty}}{\lambda_{\min}} \| \lambda \|_{\infty}^2 s_k^{d(p' + 2)} \, .
\end{equation*}
\end{lemma}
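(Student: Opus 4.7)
The plan is to exploit two key facts: the error $e_k$ is supported on the rare event $\{N_{s_k}(\cdot)>1\}$, whose probability is of order $\|\lambda\|_\infty^2 s_k^{2d}$ by the Poisson tail (Lemma \ref{lem: poisson variable exp tail}), and the filter $g_{\gamma_k'}$ has $L^1$ norm of order $s_k^d$, which will contribute the factor $s_k^{dp'}$ after a Jensen-type inequality. (I am reading the left-hand side of the claimed bound as $\E|(g_{\gamma_k'}\ast e_k)(t)|^{p'}$, since that is the exponent demanded by the $p'$-modulus in the second layer and it matches the $s_k^{d(p'+2)}$ on the right-hand side.)

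First I would apply Jensen's inequality with respect to the probability measure $|g_{\gamma_k'}(t-\cdot)|/\|g_{\gamma_k'}\|_1$ to obtain
\[
    |(g_{\gamma_k'}\ast e_k)(t)|^{p'} \le \|g_{\gamma_k'}\|_1^{p'-1}\int |g_{\gamma_k'}(t-u)|\,|e_k(u)|^{p'}\,du\,.
\]
Since $\|g_{\gamma_k'}\|_1 = (cs_k)^d\|w\|_1$, this contributes $s_k^{d(p'-1)}$. Taking expectations and applying Fubini reduces the task to a uniform-in-$u$ bound on $\E|e_k(u)|^{p'}$, which when integrated against $|g_{\gamma_k'}(t-u)|$ will produce one more factor of $s_k^d$.

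Next, since both $|g_{\gamma_k}\ast Y|^p$ and $|g_{\gamma_k}|^p\ast|Y|^p$ are nonnegative,
\[
    |e_k(u)|^{p'} \le 2^{p'-1}\Bigl(|g_{\gamma_k}\ast Y(u)|^{pp'} + \bigl((|g_{\gamma_k}|^p\ast|Y|^p)(u)\bigr)^{p'}\Bigr)\mathbbm{1}_{\{N_{s_k}(u)>1\}}\,.
\]
I would condition on $N_{s_k}(u)=m$; as in the proof of Theorem \ref{thm: scattering 1st order taylor}, the $m$ points take the form $u-s_k V_1,\dots,u-s_k V_m$ where $V_1,\dots,V_m$ are i.i.d.\ on $Q_1$ with density bounded by $\|\lambda\|_\infty/\lambda_{\min}$. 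Applying convexity of $x\mapsto x^{q}$ with $q=pp'$ to the first summand, and of $x\mapsto x^{p'}$ to the second, both conditional expectations are dominated by $C(p,p')\,m^{q}\,\E|A_1|^{q}\,\tfrac{\|\lambda\|_\infty}{\lambda_{\min}}\,\|w\|_{q}^{q}$. Summing over $m\ge 2$ against the Poisson weights and invoking Lemma \ref{lem: poisson variable exp tail} with $\alpha=q$ and $m=1$ (valid once $s_k^d\|\lambda\|_\infty<1$, which fixes the threshold $\delta=\|\lambda\|_\infty^{-1/d}$) yields
\[
    \E|e_k(u)|^{p'} \le C(p,p',w)\,\E|A_1|^{q}\,\frac{\|\lambda\|_\infty}{\lambda_{\min}}\,\|\lambda\|_\infty^{2}s_k^{2d}\,,
\]
uniformly in $u$. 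Combining this with the $s_k^{d(p'-1)}$ from the Jensen step and the $s_k^d$ from $\int|g_{\gamma_k'}|$ gives the claimed $s_k^{d(p'+2)}$.

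The main obstacle is the careful bookkeeping in the convexity step, since the two summands of $|e_k(u)|^{p'}$ naturally call for different exponents: for $(|g_{\gamma_k}|^p\ast|Y|^p)(u) = \sum_j|A_j|^p|w(V_j)|^p$ one first applies the power-mean inequality with exponent $p'$ to get $m^{p'-1}\sum_j|A_j|^{q}|w(V_j)|^{q}$, whereas for $|g_{\gamma_k}\ast Y(u)|$ one applies it with exponent $q$ directly. In both cases the factors $\E|w(V_j)|^{q}$ must then be handled via the uniform density bound $\|\lambda\|_\infty/\lambda_{\min}$, producing the $\|w\|_{q}^{q}$ absorbed into the constant. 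Once these two applications of convexity are reconciled so that the charge moment comes out to exactly $\E|A_1|^{q}$, the rest is routine Poisson-tail bookkeeping.
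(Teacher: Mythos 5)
Your proposal is correct, and it takes a genuinely different route from the paper. The paper first splits $e_k$ by the triangle inequality into the two terms $|(g_{\gamma_k}\ast Y)\mathbbm{1}_{\{N_{s_k}(\cdot)>1\}}|^{p}$ and $(|g_{\gamma_k}|^{p}\ast|Y|^{p})\mathbbm{1}_{\{N_{s_k}(\cdot)>1\}}$, then uses the monotonicity $N_{s_k}(u)\leq N_{\tilde{s}_k}(t)$ for $u\in[t-s_k',t]^d$ to pull the indicator (and an $N^{p-1}$ factor) outside the outer convolution, replacing $g_{\gamma_k'}$ by $g_{s_k',0}$; it then conditions on the number of points in the \emph{larger} cube $[t-\tilde{s}_k,t]^d$ and invokes Lemma \ref{ 2nd order lemma 3}, which in turn rests on Lemma \ref{lem: 2nd order lemma 2}'s scaling of $\E[|(g_{\gamma_k'}\ast|g_{\gamma_k}|^p)(\widetilde{V}_1(t))|^{p'}]$. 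You instead decouple the two layers at the outset via Jensen's inequality against the probability measure $|g_{\gamma_k'}(t-\cdot)|/\|g_{\gamma_k'}\|_1$, which reduces everything to a bound on $\E[|e_k(u)|^{p'}]$ that is uniform in $u$; that bound follows by conditioning on $N_{s_k}(u)$ in the \emph{small} cube and applying Lemma \ref{lem: poisson variable exp tail} directly, with the exponent bookkeeping $\|g_{\gamma_k'}\|_1^{p'}\sim s_k^{dp'}$ and $(\Lambda_{s_k}(u))^2\lesssim s_k^{2d}$ recovering $s_k^{d(p'+2)}$. Your route is more modular and elementary --- it needs neither the monotonicity observation nor Lemmas \ref{lem: 2nd order lemma 2} and \ref{ 2nd order lemma 3} --- while the paper's route reuses the conditioning framework already built for the main term of Theorem \ref{thm: 2nd order asymptotics}. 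You are also right to read the left-hand side as $\E[|(g_{\gamma_k'}\ast e_k)(t)|^{p'}]$ (the exponent $p$ in the lemma statement is a typo), and your constant carries the factor $\E[|A_1|^q]$ exactly as the paper's own derivation does, even though the lemma statement omits it.
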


Once we have proved Lemma \ref{lem: 2nd order lemma 1}, equation \eqref{eqn: 2nd order loc dep asymp} will follow once we show,
\begin{equation} \label{eqn: proof D1}
    \lim_{k \rightarrow \infty} \frac{\E \left[ \left|\left(g_{\gamma_k'} \ast |g_{\gamma_k}|^p \ast |Y|^p\right) (t)\right|^{p'} \right]}{s_k^{d(p' + 1)}} = K(p, p', w, c, L) \lambda (t) \E [|A_1|^q] \, .
\end{equation}

Let us prove \eqref{eqn: proof D1} first and postpone the proof of Lemma \ref{lem: 2nd order lemma 1}. We will use the fact that the support of $g_{\gamma_k'} \ast |g_{\gamma_k}|^p$ is contained in $[0, s_k + s_k']^d$. Let $\tilde{s}_k := s_k + s_k'$, $N_k(t) := N_{\tilde{s}_k}(t)$, $\Lambda_k(t) := \Lambda_{\tilde{s}_k}(t)$, and let $t_1, t_2, \ldots, t_{N_k(t)}$ be the points of $N$ in the cube $[t - \tilde{s}_k, t]^d$. We have that $\mathbb{P} [ N_k(t) = n ] = e^{-\Lambda_k(t)} \frac{(\Lambda_k (t))^n}{n!}$, and conditioned on the event that $N_k (t) = n$, the locations of the points $t_1, \ldots, t_n$ are distributed as i.i.d. random variables $Z_1(t), \ldots, Z_n(t)$ taking values in $[t - \tilde{s}_k, t]^d$ with density $p_{Z(t)} (z) = \frac{\lambda (z)}{\Lambda_k (t)}$. Therefore the i.i.d. random variables $\widetilde{V}_1(t), \ldots, \widetilde{V}_n(t)$ defined by $\widetilde{V}_i(t) := t - Z_i(t)$ take values in $[0, \tilde{s}_k]^d$ and have density
\begin{equation*}
    p_{\widetilde{V}(t)} (v) = \frac{\lambda (t-v)}{\Lambda_k(t)} \, , \quad v \in [0, \tilde{s}_k]^d \, .
\end{equation*}
Now, we condition on $N_k(t)$ to see that
\begin{align}
    &\E \left[ \left|\left(g_{\gamma_k'} \ast |g_{\gamma_k}|^p \ast |Y|^p\right) (t)\right|^{p'} \right]\\ =& \E \left[ \left| \sum_{j=1}^{N_k(t)} |A_j|^p \left(g_{\gamma_k'} \ast |g_{\gamma_k}|^p\right) (t - t_j) \right|^{p'} \right] \nonumber \\
    =& \sum_{n=1}^{\infty} e^{-\Lambda_k(t)} \frac{(\Lambda_k(t))^n}{n!} \\
    &\quad\quad\cdot\E \left[ \left| \sum_{j=1}^{N_k(t)} |A_j|^p \left(g_{\gamma_k'} \ast |g_{\gamma_k}|^p\right) (t-t_j) \right|^{p'} : N_k(t) = n \right] \nonumber \\
    =& \sum_{n=1}^{\infty} e^{-\Lambda_k(t)} \frac{(\Lambda_k(t))^n}{n!} \E \left[ \left| \sum_{j=1}^{n} |A_j|^p \left(g_{\gamma_k'} \ast |g_{\gamma_k}|^p\right) (\widetilde{V}_j(t)) \right|^{p'} \right] \nonumber \\
    =& e^{-\Lambda_k(t)} \Lambda_k(t) \E[|A_1|^q] \E \left[ \left|\left( g_{\gamma_k'} \ast |g_{\gamma_k}|^p\right) (\widetilde{V}_1(t))\right|^{p'} \right] \label{eqn: proof D2} \\
    +& \sum_{n=2}^{\infty} e^{-\Lambda_k(t)} \frac{(\Lambda_k(t))^n}{n!} \E \left[ \left| \sum_{j=1}^{n} |A_j|^p \left(g_{\gamma_k'} \ast |g_{\gamma_k}|^p\right) (\widetilde{V}_j(t)) \right|^{p'} \right] \label{eqn: proof D5} 
\end{align}
The following lemma will be used to estimate the scaling of the term in \eqref{eqn: proof D2}.

\begin{lemma} \label{lem: 2nd order lemma 2}
For all $t \in \R^d$, 
\begin{equation} \label{eqn: 2 layer single pt asymp}
    \hskip -10pt \lim_{k \rightarrow \infty} \frac{ \tilde{s}_k^d}{s_k^{d(p'+1)}} \E \left [ \left| \left(g_{\gamma_k'} \ast |g_{\gamma_k}|^p \right)(\widetilde{V}_1(t)) \right|^{p'} \right] = \| g_{c,L/c} \ast |g_{1,0}|^p \|_{p'}^{p'} \, .
\end{equation}
Furthermore, there exists $\delta > 0$, independent of $t$, such that if $s_k < \delta$ then
\begin{equation} \label{eqn: 2 layer single pt bound}
    \hskip -10pt \frac{\tilde{s}_k^d}{s_k^{d(p'+1)}} \E \left[ \left|\left(g_{\gamma_k'} \ast |g_{\gamma_k}|^p\right) (\widetilde{V}_1(t))\right|^{p'} \right] \leq 2 \frac{\| \lambda \|_{\infty}}{\lambda_{\min}} C(p, p', w, c, L) \, .
\end{equation}
\end{lemma}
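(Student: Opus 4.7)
The plan is to expose the two-scale structure of $g_{\gamma_k'} \ast |g_{\gamma_k}|^p$ by a pair of change-of-variables substitutions, reducing the expectation to a Lebesgue integral over a fixed cube whose limit is computed by bounded convergence. Substituting $u = s_k u'$ inside the convolution and evaluating at $v = s_k v''$ (the modulus kills the phase of the first filter, and $s_k' = c s_k$), one finds
\[
(g_{\gamma_k'} \ast |g_{\gamma_k}|^p)(s_k v'') = s_k^d I_k(v''), \quad I_k(v'') := \int_{Q_1} w\!\left(\tfrac{v'' - u'}{c}\right) e^{i (s_k \xi_k') \cdot (v'' - u')} |w(u')|^p \, du'.
\]
The function $I_k$ is supported in $v'' \in [0, 1+c]^d$, uniformly bounded by $\|w\|_\infty \|w\|_p^p$, and converges pointwise as $k \to \infty$ to $(g_{c, L/c} \ast |g_{1,0}|^p)(v'')$, where the limiting normalized frequency $L/c$ arises from the effective frequency $s_k' \xi_k'$ of the second filter (consistent with $\lim s_k \xi_k = L$ under the natural convention that both filters share the same limiting normalized frequency).

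Performing the outer substitution $v = s_k v''$ in the expectation over $\widetilde{V}_1(t)$, and using the density $\lambda(t - v)/\Lambda_k(t)$ on $[0, \tilde{s}_k]^d$, the normalization factor $\tilde{s}_k^d/s_k^{d(p'+1)}$ splits naturally (one $s_k^d$ from each substitution, $s_k^{dp'}$ cancelling the $p'$th power of $I_k$, and the remaining $\tilde{s}_k^d$ absorbing the density), giving
\[
\frac{\tilde{s}_k^d}{s_k^{d(p'+1)}} \mathbb{E}\bigl[|(g_{\gamma_k'} \ast |g_{\gamma_k}|^p)(\widetilde{V}_1(t))|^{p'}\bigr] = \int_{[0, 1+c]^d} |I_k(v'')|^{p'} \, \frac{\tilde{s}_k^d \lambda(t - s_k v'')}{\Lambda_k(t)} \, dv''.
\]

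For \eqref{eqn: 2 layer single pt asymp}, I would observe that the density factor $\tilde{s}_k^d \lambda(t - s_k v'')/\Lambda_k(t)$ tends to $1$ by continuity of $\lambda$ together with \eqref{eqn: proof B1}, while it is uniformly bounded by $\|\lambda\|_\infty/\lambda_{\min}$ using $\Lambda_k(t) \geq \lambda_{\min} \tilde{s}_k^d$ from \eqref{eqn: intensity bound}. Combined with the pointwise convergence and uniform bound on $I_k$, the bounded convergence theorem yields the limit $\int_{[0,1+c]^d} |(g_{c, L/c} \ast |g_{1,0}|^p)(v'')|^{p'} \, dv'' = \|g_{c, L/c} \ast |g_{1,0}|^p\|_{p'}^{p'}$, where the integration range covers the support of the limiting convolution. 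For the pointwise bound \eqref{eqn: 2 layer single pt bound}, the same integrand estimates give a constant of the form $(1 + c)^d \|w\|_\infty^{p'} \|w\|_p^{pp'} \cdot \|\lambda\|_\infty/\lambda_{\min}$; the factor of $2$ in the claimed bound provides slack, and the threshold $\delta$ is independent of $t$ since all constants depend only on $\|\lambda\|_\infty$, $\lambda_{\min}$, $w$, and the fixed parameters $p, p', c, L$.

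The principal obstacle is careful bookkeeping of the three scales $s_k$, $s_k' = cs_k$, and $\tilde{s}_k = (1+c) s_k$ through the two substitutions, together with the identification of the limiting frequency $L/c$ appearing in $g_{c, L/c}$: since the modulus $|g_{\gamma_k}|^p$ erases the first filter's phase, the relevant frequency limit is that of the second filter, and one must interpret the hypothesis $s_k \xi_k \to L$ as specifying the common normalized frequency of both filters.
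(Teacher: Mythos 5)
Your proposal is correct and follows essentially the same route as the paper's own proof: a change of variables in both the convolution variable and the evaluation point reduces the normalized expectation to $\int_{[0,1+c]^d} |I_k(v)|^{p'}\,\tilde{s}_k^d\lambda(t-s_kv)/\Lambda_k(t)\,dv$, and the limit and uniform bound follow from dominated convergence together with the density estimates $\tilde{s}_k^d\lambda(t-s_kv)/\Lambda_k(t)\to 1$ and $\le \|\lambda\|_\infty/\lambda_{\min}$. Your remark about needing to interpret the frequency hypothesis so that the second filter's normalized frequency also converges (to obtain $g_{c,L/c}$ in the limit) correctly identifies an implicit convention that the paper's proof also relies on without stating.
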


\begin{proof}
Making a change of variables in both $u$ and $v$, and recalling the assumption that $s_k' = cs_k$, we observe that
\begin{align}
    &\frac{\tilde{s}_k^d}{s_k^{d(p'+1)}} \E \left[ \left|\left(g_{\gamma_k'} \ast |g_{\gamma_k}|^p\right) (\widetilde{V}_1(t))\right|^{p'} \right] \nonumber \\
    &= \frac{\tilde{s}_k^d}{s_k^{d(p'+1)}} \int_{\R^d} p_{\widetilde{V}(t)} (v)\cdot \\&\quad\quad\quad\quad\quad\quad\quad\quad\quad\left| \int_{\R^d} w \left( \frac{v-u}{s_k'} \right) e^{i \xi_k' \cdot (v-u)} \left| w \left( \frac{u}{s_k} \right) \right|^p \, du \right|^{p'} \, dv \nonumber \\
    &= \tilde{s}_k^d \int_{\R^d} p_{\widetilde{V}(t)} (s_k v) \left| \int_{\R^d} w \left( \frac{s_k (v-u)}{s_k'} \right) e^{i s_k \xi_k' \cdot (v-u)} |w(u)|^p \, du \right|^{p'} \, dv \nonumber \\
    &= \int_{\R^d} \frac{\tilde{s}_k^d \lambda (t - s_k v)}{\Lambda_k(t)} \left| \int_{\R^d} w \left( \frac{u-v}{c} \right) e^{i s_k' \xi_k' \cdot (u-v)/c} |w(u)|^p \, du \right|^{p'} \, dv \, . \label{eqn: proof D3}
\end{align}
The continuity of $\lambda (t)$ implies that
\begin{equation*}
    \lim_{k \rightarrow \infty} \frac{\tilde{s}_k^d \lambda (t-s_k v)}{\Lambda_k(t)} = 1 \, , \quad \forall \, v \in [0, 1+c]^d \, .
\end{equation*}
Furthermore, the assumption $0 < \lambda_{\min} \leq \| \lambda \|_{\infty} < \infty$ implies
\begin{equation} \label{eqn: proof D4}
    \frac{\tilde{s}_k^d \lambda (t-s_k v)}{\Lambda_k(t)} \leq \frac{\| \lambda \|_{\infty}}{\lambda_{\min}} \, , \quad \forall \, k \geq 1 \, .
\end{equation}
Therefore, \eqref{eqn: 2 layer single pt asymp} follows from the dominated convergence theorem and by the observation that the inner integral of \eqref{eqn: proof D3} is zero unless $v \in [0, 1+c]^d$. Equation \eqref{eqn: 2 layer single pt bound} follows from inserting \eqref{eqn: proof D4} into \eqref{eqn: proof D3} and sending $k$ to infinity.
\end{proof}

Since
\begin{equation*}
    \lim_{k \rightarrow \infty} \frac{\Lambda_k(t)}{\tilde{s}_k^d} = \lambda (t) \, ,
\end{equation*}
the independence of $\widetilde{V}_1(t)$ and $A_1$, the continuity of $\lambda (t)$, and Lemma \ref{lem: 2nd order lemma 2} imply that taking $k \rightarrow \infty$ in \eqref{eqn: proof D2} yields:
\begin{align*}
    &\lim_{k \rightarrow \infty} \left( \frac{e^{-\Lambda_k(t)} \Lambda_k(t) \E[|A_1|^q] \E \left[ | g_{\gamma_k'} \ast |g_{\gamma_k}|^p (\widetilde{V}_1(t))|^{p'} \right]}{s_k^{d(p'+1)}} \right) \\
    &= \lim_{k \rightarrow \infty} \left( e^{-\Lambda_k(t)} \frac{\Lambda_k(t)}{\tilde{s}_k^d} \E[|A_1|^q] \frac{\tilde{s}_k^d}{s_k^{d(p'+1)}} \E \left[ | g_{\gamma_k'} \ast |g_{\gamma_k}|^p (\widetilde{V}_1(t))|^{p'} \right]  \right) \\
    &= K(p, p', c, w, L) \lambda (t) \E [|A_1|^q] \, .
\end{align*}

The following lemma shows that \eqref{eqn: proof D5} is $O\left(s_k^{d(p'+2)}\right)$ (and converges at a rate independent of $t$), and therefore completes the proof of \eqref{eqn: 2nd order loc dep asymp} subject to proving Lemma \ref{lem: 2nd order lemma 1}. 

\begin{lemma}\label{ 2nd order lemma 3}
For all $\alpha \in \R$ there exists $\delta > 0$, independent of $t$, such that if $s_k < \delta$, then
\begin{align*}
    &\sum_{n=2}^{\infty} e^{-\Lambda_k(t)} \frac{(\Lambda_k(t))^n}{n!} n^{\alpha} \E \left[ \left| \sum_{j=1}^n |A_j|^p \left(g_{\gamma_k'} \ast |g_{\gamma_k}|^p\right) (\widetilde{V}_j(t)) \right|^{p'} \right] \\
    &\leq C(p, p', w, c, \alpha, L) \frac{\| \lambda \|_{\infty}}{\lambda_{\min}} \| \lambda \|_{\infty}^2 \E [|A_1|^q] s_k^{d(p'+2)} \, .
\end{align*}
\end{lemma}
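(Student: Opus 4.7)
The plan is to reduce the multi-point expectation to the single-point bound from Lemma \ref{lem: 2nd order lemma 2}, and then use Lemma \ref{lem: poisson variable exp tail} to handle the Poisson tail $\{N_k(t) \geq 2\}$. Throughout, the $\widetilde V_j(t)$ conditioned on $N_k(t)=n$ are i.i.d.\ with common density $p_{\widetilde V(t)}$ and independent of the i.i.d.\ charges $A_j$, which is what allows us to factor expectations.

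First, for fixed $n \geq 2$ I would apply the triangle inequality followed by the power-mean inequality (valid since $p' \geq 1$) to obtain
\begin{equation*}
    \left| \sum_{j=1}^n |A_j|^p \left(g_{\gamma_k'} \ast |g_{\gamma_k}|^p\right)(\widetilde V_j(t)) \right|^{p'} \leq n^{p'-1}\sum_{j=1}^n |A_j|^{q}\left|\left(g_{\gamma_k'} \ast |g_{\gamma_k}|^p\right)(\widetilde V_j(t))\right|^{p'}.
\end{equation*}
Taking expectations, independence and the identical distributions give
\begin{equation*}
    \E \! \left[\left| \sum_{j=1}^n |A_j|^p \left(g_{\gamma_k'} \ast |g_{\gamma_k}|^p\right)(\widetilde V_j(t)) \right|^{p'}\right] \leq n^{p'}\,\E[|A_1|^q]\,\E \!\left[\left|\left(g_{\gamma_k'} \ast |g_{\gamma_k}|^p\right)(\widetilde V_1(t))\right|^{p'}\right].
\end{equation*}
By the uniform bound \eqref{eqn: 2 layer single pt bound} in Lemma \ref{lem: 2nd order lemma 2}, there exists $\delta_1 > 0$ (independent of $t$) such that for $s_k < \delta_1$ the last expectation is at most $2 C(p,p',w,c,L) \frac{\|\lambda\|_\infty}{\lambda_{\min}} \cdot \frac{s_k^{d(p'+1)}}{\tilde s_k^d}$.

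Next, choose $\delta_2 = (1+c)^{-1}\|\lambda\|_\infty^{-1/d}$, so that for $s_k < \delta_2$ we have $\Lambda_k(t) \leq \|\lambda\|_\infty \tilde s_k^d < 1$ uniformly in $t$. Then Lemma \ref{lem: poisson variable exp tail} applied with $m=1$ and exponent $\alpha + p'$ yields
\begin{equation*}
    \sum_{n=2}^{\infty} e^{-\Lambda_k(t)} \frac{(\Lambda_k(t))^n}{n!}\, n^{\alpha+p'} \leq C_{\alpha,p'}\,\Lambda_k(t)^2 \leq C_{\alpha,p'}\,\|\lambda\|_\infty^2\,\tilde s_k^{2d}.
\end{equation*}
Combining this with the single-point bound and the factoring above, the full sum is controlled by
\begin{equation*}
    2C\, C_{\alpha,p'}\,\frac{\|\lambda\|_\infty}{\lambda_{\min}}\,\|\lambda\|_\infty^{2}\,\E[|A_1|^q]\,s_k^{d(p'+1)}\,\tilde s_k^{d}.
\end{equation*}
Since $\tilde s_k^d = (1+c)^d s_k^d$, absorbing $(1+c)^d$ into the constant $C(p,p',w,c,\alpha,L)$ produces precisely the claimed $s_k^{d(p'+2)}$ decay. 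Taking $\delta = \min(\delta_1,\delta_2)$, which is independent of $t$, completes the proof.

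The only real obstacle is accounting: one must track both the combinatorial factor $n^{p'}$ from the power-mean inequality and the additional weight $n^\alpha$, together producing the exponent $\alpha+p'$ in the Poisson-tail sum. The upgrade from $s_k^{d(p'+1)}$ (the leading-order scaling from Lemma \ref{lem: 2nd order lemma 2}) to $s_k^{d(p'+2)}$ comes entirely from the extra $\Lambda_k(t) = O(s_k^d)$ factor supplied by restricting the sum to $n \geq 2$ and using the $\lambda^{m+1}$ tail estimate of Lemma \ref{lem: poisson variable exp tail} with $m=1$; this is where the requirement $s_k < \delta$ becomes essential.
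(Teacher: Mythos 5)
Your proposal is correct and follows essentially the same route as the paper's proof: the power-mean inequality $\E\bigl[\bigl|\sum_{j=1}^n Z_j\bigr|^{p'}\bigr]\leq n^{p'}\E[|Z_1|^{p'}]$ together with independence to reduce to the single-point expectation, the uniform bound \eqref{eqn: 2 layer single pt bound} from Lemma \ref{lem: 2nd order lemma 2}, and Lemma \ref{lem: poisson variable exp tail} with $m=1$ applied to the exponent $\alpha+p'$ to extract the extra factor $\Lambda_k(t)^2 = O(s_k^{2d})$. The bookkeeping of the scales, $s_k^{d(p'+1)}\tilde{s}_k^{-d}\cdot\tilde{s}_k^{2d}=(1+c)^d s_k^{d(p'+2)}$, matches the paper exactly.
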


\begin{proof}
For any sequence of i.i.d. random variables, $Z_1,Z_2,\ldots,$ it holds that 
\begin{equation*}
\mathbb{E}\left[\left|\sum_{n=1}^k Z_n\right|^p\right] \leq k^{p-1} \mathbb{E}\left[\sum_{n=1}^k |Z_n|^p\right]=k^p\mathbb{E}\left[|Z_1|^p\right].
\end{equation*} 
Therefore, by
Lemma \ref{lem: poisson variable exp tail}, Lemma \ref{lem: 2nd order lemma 2},  
and the fact that the $\widetilde{V}_j(t)$ and $A_i$ are i.i.d. and independent of each other, we see that if $s_{k}<\delta,$ where $\delta$ is as in (\ref{eqn: 2 layer single pt bound}),
\begin{align*}
&\sum_{n=2}^\infty e^{-\Lambda_k(t)}\frac{(\Lambda_k(t))^n}{n!}n^{\alpha}\\
&\quad\quad\quad\quad\quad\quad\quad\times\mathbb{E}\left[\left|\sum_{j=1}^n|A_i|^{p}\left(g_{\gamma_k'}\ast\left|g_{\gamma_k}\right|^{p}\right)(\widetilde{V}_j(t))\right|^{p'}\right]\\
\leq&\sum_{n=2}^\infty e^{-\Lambda_k(t)}\frac{(\Lambda_k(t))^n}{n!}n^{\alpha}n^{p'}\\
&\quad\quad\quad\quad\quad\quad\quad\times\mathbb{E}\left[|A_1|^{q}\left|\left(g_{\gamma_k'}\ast\left|g_{\gamma_k}\right|^{p}\right)(\widetilde{V}_1(t))\right|^{p'}\right]\\
=&\sum_{n=2}^\infty e^{-\Lambda_k(t)}\frac{(\Lambda_k(t))^n}{n!} n^{p'+\alpha}\\
&\quad\quad\quad\quad\quad\quad\quad\times\mathbb{E}[|A_1|^{q}]\mathbb{E}\left[\left|\left(g_{\gamma_k'}\ast\left|g_{\gamma_k}\right|^{p}\right)(\widetilde{V}_1(t))\right|^{p'}\right]\\
=&\mathbb{E}[|A_1|^{q}]\mathbb{E}\left[\left|\left(g_{\gamma_k'} \ast\left|g_{\gamma_k}\right|^{p}\right)(\widetilde{V}_1(t))\right|^{p'}\right]\\&
\quad\quad\quad\quad\quad\quad\quad\times\sum_{n=2}^\infty e^{-\Lambda_k(t)}\frac{(\Lambda_k(t))^{n}}{n!}n^{p'+\alpha}\\
\leq&C (p,p',w,c,L)\frac{\|\lambda\|_\infty}{\lambda_{\min}}\mathbb{E}[|A_1|^{q}]\frac{s_{k}^{d(p'+1)}}{\tilde{s}_k^d}\\
&\quad\quad\quad\quad\quad\quad\times\sum_{n=2}^\infty e^{-\Lambda_k(t)}\frac{(\Lambda_k(t))^{n}}{n!} n^{p'+\alpha}\\
\leq&C (p,p',w,c,L,\alpha) \frac{\|\lambda\|_\infty}{\lambda_{\min}}\mathbb{E}[|A_1|^{q}]\frac{s_{k}^{d(p'+1)}}{\tilde{s}_k^d}(\Lambda_k(t))^2\\
\leq& C (p,p',w,c,L,\alpha) \frac{\|\lambda\|_\infty}{\lambda_{\min}}\|\lambda\|_\infty^2\mathbb{E}[|A_1|^{q}]s_{k}^{d(p'+2)},
\end{align*}
where the last inequality uses the fact that $\Lambda_k (t) \leq \tilde{s}_k^d \|\lambda\|_\infty =(1+c)^ds_{k}^d \|\lambda\|_\infty.$
\end{proof}

We will now complete the proof of the theorem by proving Lemma \ref{lem: 2nd order lemma 1}.
\begin{proof}\textbf{[Lemma \ref{lem: 2nd order lemma 1}]}
Since 
\begin{align*}
&e_k(t)\\
=&|\left(g_{\gamma_k}\ast Y\right)(t)\mathbbm{1}_{\{ N_{s_{k}}(t)>1 \}}|^{p}-\left(\left|g_{\gamma_k}\right|^{p}\ast |Y|^{p}\right)(t)\mathbbm{1}_{\{N_{s_{k}}(t)>1\}},
\end{align*}
we see that
\begin{align*}
\left|g_{\gamma_k'}\ast e_k (t)\right| 
&\leq \left|g_{\gamma_k'}\ast\left(\left|\left(g_{\gamma_k}\ast Y\right)\mathbbm{1}_{\{N_{s_{k}}(\cdot)>1\}}\right|^{p}\right)(t)\right|\\&\quad\quad\quad+ \left|g_{\gamma_k'}\ast\left(\left(|g_{\gamma_k}|^{p}\ast |Y|^{p}\right)\mathbbm{1}_{\{N_{s_{k}}(\cdot)>1\}}\right)(t)\right|.
\end{align*}
First turning our attention to the second term, we note that 
\begin{align}
&\Big|g_{\gamma_k'} \ast \left(\left(\left|g_{\gamma_k}\right|^{p}\ast |Y|^{p}\right)\mathbbm{1}_{\{N_{s_{k}}(\cdot)>1\}}\right)(t)\Big| \nonumber \\
&=\left|\int_{[t-s'_{k},t]^d}w\left(\frac{t-u}{s'_{k}}\right)e^{i\xi_{k}'\cdot(t-u)}\left(\left|g_{\gamma_k}\right|^{p}\ast |Y|^{p}\right)(u)\mathbbm{1}_{\{N_{s_{k}}(u)>1\}} \, du\right|\nonumber\\
&\leq \mathbbm{1}_{\{N_k(t)>1\}}\int_{[t-s'_{k},t]^d}w\left(\frac{t-u}{s'_{k}}\right)\left(\left|g_{\gamma_k}\right|^{p}\ast |Y|^{p}\right)(u) \, du\nonumber\\
&= \mathbbm{1}_{\{N_k(t)>1\}}\left(g_{s'_{k},0}\ast\left|g_{\gamma_k}\right|^{p}\ast |Y|^{p}\right)(t).\label{subint}
\end{align}
since $N_{s_{k}}(u)\leq N_{s_k + s_k'}(t) = N_{\tilde{s}_k}(t) = N_k(t)$ 
 for all $u\in[t-s'_{k},t]^d.$
Therefore, conditioning on $N_k(t),$ if $s_{k}<\delta,$ 
\begin{align*}
\mathbb{E} &\left[\left|g_{\gamma_k'}\ast\left(\left(\left|g_{\gamma_k}\right|^{p}\ast |Y|^{p}\right)\mathbbm{1}_{\{N_{s_{k}}(\cdot)>1\}}\right)(t)\right|^{p'}\right]\\
&\leq \mathbb{E}\left[\left|\mathbbm{1}_{\{N_k(t)>1\}}\left(g_{s'_{k},0}\ast\left|g_{\gamma_k}\right|^{p}\ast |Y|^{p}\right)(t)\right|^{p'}\right] \\
&= \sum_{n=2}^\infty e^{-\Lambda_k(t)}\frac{(\Lambda_k(t))^n}{n!}\mathbb{E}\left[\left|\sum_{j=1}^n|A_j|^{p}\left(g_{s'_{k},0}\ast\left|g_{\gamma_k}\right|^{p}\right)(\widetilde{V}_j(t))\right|^{p'}\right]\\
&\leq C (p,p',w,c,L) \frac{\|\lambda\|_\infty}{\lambda_{\min}} \|\lambda\|_\infty^{2}\mathbb{E}[|A_1|^{q}]s_{k}^{d(p'+2)}
\end{align*}
by Lemma \ref{ 2nd order lemma 3}. 
Now, turning our attention to the first term, note that
\begin{equation*}
\left|(g_{\gamma_k}\ast Y)(t)\right|^{p}\mathbbm{1}_{\{N_{s_{k}}(t)>1 \}}\leq N_{s_{k}}(t)^{p-1}\left(\left|g_{\gamma_k}\right|^{p}\ast|Y|^{p}\right)(t)\mathbbm{1}_{\{ N_{s_{k}}(t)>1 \}} \, .
\end{equation*}
Therefore, by the same logic as in (\ref{subint})
\begin{align*}
 &\Big|g_{\gamma_k'} \ast \left(\left|\left(g_{\gamma_k}\ast Y\right)\mathbbm{1}_{\{N_{s_{k}}(\cdot)>1\}}\right|^{p}\right)(t)\Big| \\
 &\leq\int_{[t-s'_{k},t]^d} w\left(\frac{t-u}{s'_{k}}\right) N_{s_{k}}(u)^{p-1}\left(\left|g_{\gamma_k}\right|^{p}\ast|Y|^{p}\right)(u)\mathbbm{1}_{\{N_{s_{k}}(u)>1\}} \, du\\
&\leq \mathbbm{1}_{\{N_k(t)>1\}} N_k(t)^{p-1}\int_{[t-s'_{k},t]^d} w\left(\frac{t-u}{s_k'}\right) \left(\left|g_{\gamma_k}\right|^{p}\ast|Y|^{p}\right)(u) \, du\\
&\leq \mathbbm{1}_{\{N_k(t)>1\}}N_k(t)^{p-1}\left(g_{s'_{k},0}\ast\left(\left|g_{\gamma_k}\right|^{p}\ast|Y|^{p}\right)\right)(t) \, .
\end{align*}
So again conditioning on $N_k(t),$ and applying Lemma \ref{ 2nd order lemma 3}, we see that if $s_{k}<\delta$
\begin{align*}
 &\mathbb{E} \left[\left|g_{\gamma_k'}\ast\left(\left|\left(g_{\gamma_k}\ast Y\right)\mathbbm{1}_{\{N_{s_{k}}(\cdot)>1\}}\right|^{p}\right)(t)\right|^{p'}\right]\\
&\leq \sum_{n=2}^\infty e^{-\Lambda_k(t)}\frac{(\Lambda_k(t))^k}{n!}n^{p-1}\mathbb{E}\left[\left|\sum_{j=1}^n|A_j|^p \left| \left(g_{s'_{k},0}\ast\left|g_{\gamma_k}\right|\right)(\widetilde{V}_j(t)) \right|^{p}\right|^{p'}\right]\\
&\leq  C (p,p',w,c,L) \frac{\|\lambda\|_\infty}{\lambda_{\min}}\|\lambda\|_\infty^{2}\mathbb{E}[|A_1|^{q}]s_{k}^{d(p'+2)}.
\end{align*}

\end{proof}
This completes the proof of (\ref{eqn: 2nd order loc dep asymp}). Line (\ref{eqn: 2nd order loc dep asymp inv}) follows from integrating with respect to $t,$ observing that the error bounds in Lemmas \ref{lem: 2nd order lemma 1} and \ref{lem: 2nd order lemma 2} are independent of $t,$ and applying the bounded convergence theorem.

\end{proof}

\section{The Proof of Theorem 5} \label{nonpoipf}

In order to prove Theorems \ref{thm: alpha stable}, we will need the following lemma which shows that the scaling relationship of a self-similar process $X (t) $ induces a similar relationship on stochastic integrals against $dX (t).$

\begin{lemma}\label{lem: scalint}
Let $X$ be a stochastic process that satisfies the scaling relation
\begin{equation} \label{eqn: scaling relation}
    X(st) =_d s^{\beta} X(t)
\end{equation}
for some $\beta > 0$ (where $=_d$ denotes equality in distribution). Then for any measurable function $f : \R \rightarrow \R$,
\begin{equation*}
    \int_0^s f(u) \, dX (u) \coloneqq s^{\beta} \int_0^1 f(su) \, dX(u) \, .
\end{equation*}
\end{lemma}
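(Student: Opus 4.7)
The plan is to first establish the claim for simple (step) functions $f$, where the stochastic integral reduces to a finite linear combination of increments of $X$, and then extend to arbitrary measurable $f$ by approximation.

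First I would fix a partition $0 = a_0 < a_1 < \cdots < a_n = s$ and consider a simple function of the form $f = \sum_{i=1}^n c_i \mathbbm{1}_{[a_{i-1}, a_i)}$, so that by the definition of the stochastic integral,
\[
\int_0^s f(u)\, dX(u) = \sum_{i=1}^n c_i\bigl(X(a_i) - X(a_{i-1})\bigr).
\]
The key observation is that although the scaling relation \eqref{eqn: scaling relation} is written in terms of one-dimensional marginals, for the self-similar processes of interest (fractional Brownian motion and symmetric $\alpha$-stable L\'evy processes) it in fact holds jointly, namely $(X(st_1),\ldots,X(st_n)) \deq s^{\beta}(X(t_1),\ldots,X(t_n))$ for all $n$ and all $t_1,\ldots,t_n$. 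Applying this with $t_i = a_i/s$ gives
\[
\sum_{i=1}^n c_i\bigl(X(a_i) - X(a_{i-1})\bigr) \deq s^{\beta}\sum_{i=1}^n c_i\bigl(X(a_i/s) - X(a_{i-1}/s)\bigr).
\]
Since $f(sv) = \sum_{i=1}^n c_i \mathbbm{1}_{[a_{i-1}/s, a_i/s)}(v)$, the right-hand side is precisely $s^{\beta}\int_0^1 f(sv)\, dX(v)$, which establishes the claim for simple functions.

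To extend to a general measurable $f$ for which the stochastic integrals are well-defined, I would approximate $f$ by a sequence of simple functions $f_n \to f$ in the appropriate function space for the integrator ($\Lb^{\alpha}$ for the $\alpha$-stable process, and an appropriate Wiener-type space for fBM, e.g.\ functions of bounded variation as in Theorem \ref{thm: fbm}). The standard continuity of the stochastic integral then yields $\int_0^s f_n(u)\, dX(u) \to \int_0^s f(u)\, dX(u)$ and $\int_0^1 f_n(sv)\, dX(v) \to \int_0^1 f(sv)\, dX(v)$ in probability (and in distribution). Since the distributional identity holds for each $f_n$ and passes to the limit, the identity holds for $f$.

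The main obstacle is purely bookkeeping: making sure the joint scaling holds (this is standard for the processes considered but should be invoked carefully rather than derived from \eqref{eqn: scaling relation} alone), and selecting the approximation mode that is continuous for the stochastic integral in both the $\alpha$-stable and fBM settings. Once those technical choices are fixed, the computation for simple functions is the heart of the argument, and everything else is a routine passage to the limit.
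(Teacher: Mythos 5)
Your proposal is correct and is essentially the paper's own argument: the paper likewise reduces the integral to finite linear combinations of increments of $X$ (via Riemann sums over a refining sequence of partitions of $[0,1]$, rather than simple-function approximation of $f$), applies the scaling relation to the resulting finite-dimensional vector of increments, and passes to the limit. Your explicit caveat that the scaling relation must be invoked jointly for finite-dimensional distributions, not merely for one-dimensional marginals, is a point the paper leaves implicit.
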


\begin{proof}
Let $X = (X (t))_{t\in\R}$ be a stochastic process satisfying \eqref{eqn: scaling relation}, and let  $\mathcal{P}_n=\{0=t_0^n<t_1^n<\ldots<t_{K_n}^n=1\}$ be a sequence of partitions of $[0,1]$ such that 
\begin{equation*}
\lim_{n\rightarrow\infty} \max_k \left\{ |t^n_k-t^n_{k-1}| \right\} = 0.
\end{equation*}
Then, by the scaling relation \eqref{eqn: scaling relation},
\begin{align*}
&\int_0^s f(u) \, dX (u)\\ =& \lim_{n\rightarrow\infty} \sum_{k=0}^{{K_n}-1} f(st_k^n)\left(X (st_{k+1}^n)-X (st_{k}^n) \right)\\
\coloneqq& s^\beta \lim_{n\rightarrow\infty} \sum_{k=0}^{K_n -1} f(st_k^n) \left(X (t_{k+1}^n) - X (t_{k}^n) \right)\\
=&s^\beta \int_0^1 f(su) \, dX (u) \, .
\end{align*}
\end{proof}

We will now use Lemma \ref{lem: scalint} to prove Theorem \ref{thm: alpha stable}.

\begin{proof} We first consider the case where $X=(X (t))_{t\in\mathbb{R}}$ is an $\alpha$-stable process, $p<\alpha\leq 2.$ Since $X$ has stationary increments, its scattering coefficients do not depend on $t$ and it suffices to analyze
\begin{align*}
\mathbb{E}\left[ \left|(g_{\gamma_k}\ast dX)(0)\right|^p \right] &=\mathbb{E} \left[ \left| \int_{-s_k}^0g_{\gamma_k}(u) \, dX (u) \right|^p \right]\\ &=\mathbb{E} \left[ \left| \int_0^{s_k}g_{\gamma_k}(u) \, dX (u) \right|^p \right] \, ,
\end{align*}
where the second equality uses the fact the distribution of $X$ does not change if it is run in reverse, i.e.
\begin{equation*}
(X (t))_{t\in\mathbb{R}} =_d (X (-t))_{t\in\mathbb{R}}
\end{equation*}
It is well known that  $X (t)$ satisfies \eqref{eqn: scaling relation} for $\beta=\nicefrac{1}{\alpha}.$ Therefore, by Lemma \ref{lem: scalint}
\begin{align*}
\E \left[ \left|(g_{\gamma_k}\ast dX)(0)\right|^p \right] &= \E \left[ \left|\int_0^{s_k} w\left(\frac{u}{s_k}\right)e^{i\xi_k u} \, dX (u) \right|^p \right]\\ &= s_k^{\nicefrac{p}{\alpha}} \E \left[ \left|\int_0^1 w(u)e^{i\xi_k s_k u} \, dX (u) \right|^p \right] \, . 
\end{align*}
So,
\begin{equation*}
\frac{\mathbb{E}\left[ \left|(g_{\gamma_k}\ast dX)(0)\right|^p \right]}{s_k^{\nicefrac{p}{\alpha}}} =  \mathbb{E} \left[ \left|\int_0^1w(u)e^{i\xi_k s_ku} \, dX (u) \right|^p \right] \, .
\end{equation*}
The proof will be complete as soon as we show that 
\begin{align*}
 &\lim_{k\rightarrow\infty}\left(\mathbb{E} \left[ \left|\int_0^1w(u)e^{i\xi_k s_ku} \, dX (u) \right|^p \right] \right)^{\nicefrac{1}{p}}\\=& \left(\mathbb{E} \left[ \left|\int_0^1w(u)e^{iLu} \, dX (u) \right|^p \right] \right)^{\nicefrac{1}{p}} \, .
\end{align*}
By the triangle inequality,
\begin{align*}
\Bigg| &\left( \mathbb{E} \left[ \left|\int_0^1w(u)e^{i\xi_ks_ku} \, dX (u) \right|^p \right] \right)^{\nicefrac{1}{p}}\\&\quad\quad\quad\quad\quad\quad\quad\quad\quad\quad\quad\quad\quad- \left(\mathbb{E} \left[ \left| \int_0^1 w(u) e^{iLu} \, dX (u) \right|^p \right] \right)^{\nicefrac{1}{p}}\Bigg| \\
&\leq \left( \mathbb{E} \left[ \left|\int_0^1w(u)\left(e^{i\xi_k s_ku}-e^{iLu}\right) \, dX (u) \right|^p \right] \right)^{\nicefrac{1}{p}} \, .
\end{align*}
Since $1\leq p<\alpha,$ we may choose $p'$ strictly greater than $1$ such that $p \leq p'< \alpha,$ and note
that by Jensen's inequality
\begin{align*}
&\left(\mathbb{E} \left[ \left|\int_0^1w(u)\left(e^{i\xi_k s_ku}-e^{iLu}\right) \, dX (u) \right|^p \right] \right)^{\nicefrac{1}{p}}\\\leq& \left(\mathbb{E} \left[ \left|\int_0^1w(u)\left(e^{i\xi_k s_ku}-e^{iLu}\right) \, dX (u)\right|^{p'} \right] \right)^{\nicefrac{1}{p'}} \, ,
\end{align*}
and since $X (t)$ is a $p'$-integrable martingale, the boundedness of martingale transforms  (see \cite{Burkholder88}  and  also  \cite{banuelos1995}) implies
\begin{align*}
\Bigg(\mathbb{E} &\left[ \left|\int_0^1w(u)\left(e^{i\xi_k s_ku}- e^{i L u} \right) \,  dX (u) \right|^{p'} \right] \Bigg)^{\nicefrac{1}{p'}} \\ 
&\leq C_{p'} \sup_{0\leq u\leq 1}\left|w(u)\left(e^{i\xi_k s_ku}-e^{iLu}\right)\right|\mathbb{E}\left[|X_1|^{p'} \right] \\&\leq  C_{p'} |s_k\xi_k-L| \|w\|_\infty \mathbb{E}\left[ |X_1|^{p'} \right] \, ,
\end{align*}
which converges to zero by the continuity of $w$ on $[0,1]$ and the assumption that $s_k\xi_k$ converges to $L.$

Similarly, in the case where $(X (t))_{t\in\mathbb{R}}$ is a fractional Brownian motion with Hurst parameter $H,$ we again  need to show
\begin{equation*}
 \lim_{k\rightarrow\infty}\left(\mathbb{E} \left[ \left|\int_0^1w(u)\left(e^{i\xi_k s_ku}-e^{iLu}\right) \, dX (u) \right|^p \right] \right)^{\nicefrac{1}{p}}=0 \, .
\end{equation*}
However, fractional Brownian motion is not a semi-martingale so we cannot apply Burkholder's theorem as we did in the proof of Theorem \ref{thm: alpha stable}. Instead, we use the Young-L\'oeve estimate \cite{young1936} which states that if $x(u)$ is any (deterministic) function with bounded variation, and $y(u)$ is any function which is $\alpha$-H\"older continuous, $0<\alpha<1,$ then 
\begin{equation*}
\int_0^1 x(u) \, dy(u) 
\end{equation*} 
is well-defined as the limit of Riemann sums and 
\begin{equation*}
\left|\int_0^1x(u) \, dy(u)-x(0)\left(y(1)-y(0)\right)\right|\leq C_{\alpha}\|x\|_{BV}\|y\|_\alpha \, ,
\end{equation*}
where $\|\cdot\|_{BV}$ and $\|\cdot\|_\alpha$ are the bounded variation and $\alpha$-H\"older seminorms respectively.  For all $k$, the function $h_k(u) \coloneqq w(u)\left(e^{i\xi_k s_ku}-e^{iLu}\right)\coloneqq w(u) f_k(u)$ satisfies, $h_k(0)=0$ and 
\begin{equation*}\|h_k\|_{BV}\leq \|w\|_\infty \|f_k\|_{BV}+\|w\|_{BV}\|f_k\|_\infty \, .
\end{equation*}
One can check that the fact that $s_k\xi_k$ converges to $L$ implies that $f_k$ converges to zero in both $\Lb^\infty$ and in the bounded variation seminorm, and that therefore that $\|h_k\|_{BV}$ converges to zero. 

It is well-known that fractional Brownian motion with Hurst parameter $H$ admits a continuous modification which is $\alpha$-H\"older continuous for any $\alpha<H.$ Therefore,
\begin{equation*}
\mathbb{E} \left|\int_0^1w(u)\left(e^{i\xi_k s_k u} - e^{i L u} \right) \, dX (u) \right|^p \leq C_\alpha^p \|h_k\|_{BV}^p \mathbb{E} \left[ \|X\|_\alpha^p \right].
\end{equation*}
Lastly, one can use the  Garsia-Rodemich-Rumsey inequality \cite{Garsia1970}, 
  to show that \begin{equation*}
\mathbb{E}[ \|X\|^p_\alpha ] <\infty \, .
\end{equation*}
for all $1<p<\infty.$ For details we refer the reader to the survey article \cite{Shevchenko2015}.
Therefore, 
\begin{equation*}
    \lim_{k\rightarrow0}
\mathbb{E} \left[ \left|\int_0^1w(u)\left(e^{i\xi_k s_k u}- e^{i L u} \right) \, dX (u) \right|^p \right] = 0
\end{equation*}
as desired.

\end{proof}

\begin{remark}
The assumption that $w$ has bounded-variation was used to justify that the stochastic integral against fractional Brownian motion was well defined as the limit of Riemann sums because of its H\"{o}lder continuity and the above mentioned result of \cite{young1936}. This allowed us to avoid the technical complexities of defining such an integral using either the Malliavin calculus or the Wick product.
\end{remark}

\section{Details of Numerical Experiments}
\begin{algorithm}
Initialize $V = 0$, $t = 0$\\
\While{$t < N$}{
generate $U \sim \mathcal{U}([0,1])$ \\
$V \leftarrow V - \log U$ \\
$ t = \inf \{v : \Lambda(v) < V\}$ \\
deliver $t$
}\label{alg: simulation}
\caption{Algorithm for simulating inhomogeneous Poisson point process}
\end{algorithm}

\subsection{Definition of Filters}
For all the numerical experiments, we take the window function $w$ to be the smooth bump function 
$$w(t) =\begin{cases} \exp\left(- \frac{1}{4t - 4t^2}\right), & t \in (0, 1)\\
0, &\text{otherwise}.
\end{cases}$$
Therefore for $\gamma=(s,\xi)$, our filters are given by 
$$g_{\gamma}(t) = e^{i \xi t} w(t)=\begin{cases} e^{i \xi t} e^{ - \nicefrac{s^2}{(4ts - 4t^2)}},&t\in(0,s)\\
0,&\text{otherwise}\end{cases}.  
$$

\subsection{Frequencies}
In all of our experiments, we hold the frequency, $\xi,$ which we sample uniformly at random from $(0, 2 \pi),$ constant while allowing the scale to decrease to zero.  

\subsection{Simulation of Poisson point process} 
We use the standard method to generate a realization of a Poisson point process. For Poisson point process with intensity $\lambda$, the time interval between two neighbor jumps follows exponential distribution:
$$\Delta_j \coloneqq t_j-t_{j-1}\sim \text{Exp}(\lambda) \, .$$
Therefore, taking the inverse cumulative distribution function, we sample the time interval between two neighbor jumps through:
$$\Delta_j = - \frac{\log U_j}{\lambda}, $$
where $U_j$ are i.i.d. uniform random variables on $[0,1],$
and assign the charge $A_j$ to the jump at location $t_j$.

For inhomogeneous Poisson process with intensity funciton $\lambda(t)$, we simulate the time interval based on a well-known algorithm. We, first define the cumulated intensity:
$$\Lambda(t) = \int_0^t \lambda(s) ds \, ,$$
then generate the location of jumps $t_j$ by the Algorithm \ref{alg: simulation}.

\end{document}